%
%
%
%
\documentclass[12pt,leqno,twoside,sort]{amsart}

\usepackage{amssymb,amsmath,amsthm,soul,color}

\usepackage[normalem]{ulem}

\usepackage{amssymb,amsmath,amsthm}

\usepackage[utf8]{inputenc}

\usepackage{todonotes}

\usepackage[left=2cm, right=2cm, top=2cm, bottom=2cm]{geometry}

\usepackage{url}

\usepackage[numbers,sort&compress]{natbib}

\usepackage{fixmath}

\usepackage[T1]{fontenc}
\usepackage{lmodern}
\usepackage{microtype}
\usepackage[normalem]{ulem}

\usepackage[colorlinks=true,urlcolor=blue,
citecolor=red,linkcolor=blue,linktocpage,pdfpagelabels,
bookmarksnumbered,bookmarksopen]{hyperref}

\linespread{1.1}

\newtheorem{Th}{Theorem}[section]
\newtheorem{Prop}[Th]{Proposition}
\newtheorem{Lem}[Th]{Lemma}

\newenvironment{altproof}[1]
{\noindent
{\em Proof of {#1}}.}
{\nopagebreak\mbox{}\hfill $\Box$\par\addvspace{0.5cm}}

\newcommand{\wt}{\widetilde}

\newcommand{\eps}{\varepsilon}

\newcommand{\R}{\mathbb{R}}

\newcommand{\Z}{\mathbb{Z}}

\newcommand{\cA}{{\mathcal A}}

\newcommand{\cC}{{\mathcal C}}
\newcommand{\cD}{{\mathcal D}}

\newcommand{\cG}{{\mathcal G}}
\newcommand{\cH}{{\mathcal H}}
\newcommand{\cI}{{\mathcal I}}
\newcommand{\cJ}{{\mathcal J}}
\newcommand{\cK}{{\mathcal K}}

\newcommand{\cM}{{\mathcal M}}
\newcommand{\cN}{{\mathcal N}}

\newcommand{\cP}{{\mathcal P}}
\newcommand{\cQ}{{\mathcal Q}}

\newcommand{\cT}{{\mathcal T}}

\newcommand{\ga}{\gamma}

\newcommand{\Ga}{\Gamma}
\newcommand{\Om}{\Omega}

\newcommand\J{\mathcal{J}}

\newcommand\tJ{\wt{\cJ}}

\newcommand{\weakto}{\rightharpoonup}

\newcommand{\cTto}{\stackrel{\cT}{\longrightarrow}}

\renewcommand\J{\mathcal{J}}

\numberwithin{equation}{section}

\newcommand{\supp}{\mathrm{supp}\,}

\newcommand{\loc}{\mathrm{loc}}

\begin{document}

\title{Bound states for the Schr\"odinger equation with mixed-type nonlinearites}

\author[B. Bieganowski]{Bartosz Bieganowski}
\address[B. Bieganowski]{\newline\indent  	Faculty of Mathematics and Computer Science,		\newline\indent 	Nicolaus Copernicus University, \newline\indent ul. Chopina 12/18, 87-100 Toru\'n, Poland}	\email{\href{mailto:bartoszb@mat.umk.pl}{bartoszb@mat.umk.pl}}	

\author[J. Mederski]{Jaros\l aw Mederski}
\address[J. Mederski]{\newline\indent  	Institute of Mathematics, \newline\indent		Polish Academy of Sciences, \newline\indent  ul. \'Sniadeckich 8, 00-956		Warszawa, Poland \newline\indent and	\newline\indent		Faculty of Mathematics and Computer Science,		\newline\indent 	Nicolaus Copernicus University, \newline\indent ul. Chopina 12/18, 87-100 Toru\'n, Poland}	\email{\href{mailto:jmederski@impan.pl}{jmederski@impan.pl}}

\date{}	\date{\today} 

\maketitle

\begin{abstract} 
We prove the existence results for the Schr\"odinger equation of the form
$$
-\Delta u + V(x) u = g(x,u), \quad x \in \R^N,
$$
where $g$ is superlinear and subcritical in some periodic set $K$ and linear in $\R^N \setminus K$ for sufficiently large $|u|$. The periodic potential $V$ is such that $0$ lies in a spectral gap of  $-\Delta+V$.
We find a solution with the energy bounded by a certain min-max level, and infinitely many geometrically distinct solutions provided that $g$ is odd in $u$.
\medskip

\noindent \textbf{Keywords:} variational methods, strongly indefinite functional, Cerami sequences, nonlinear Schr\"odinger equation, superlinear nonlinearity, Kerr effect, saturation effect, multiplicity of solutions.
   
\noindent \textbf{AMS Subject Classification:}  35Q55, 35Q60, 35A15, 35J20, 58E05
\end{abstract}

\section{Introduction}

The nonlinear Schr\"odinger equation
\begin{equation}\label{eq}
-\Delta u + V(x) u = g(x,u), \quad x \in \R^N, \ u \in H^1 (\R^N)
\end{equation}
arises in many various branches of mathematical physics, in particular  the so-called \textit{standing waves} $\Phi(x,t) = e^{-\mathrm{i}\omega t} u(x)$ of the time-dependent, nonlinear Schr\"odinger equation of the form
$$
\mathrm{i} \frac{\partial\Phi(x,t)}{\partial t} = - \Delta \Phi(x,t) + (V(x) + \omega ) \Phi(x,t) - g(x,|\Phi(x,t)|) \Phi(x,t), \quad (x,t) \in \R^N \times \R,
$$
appear in models in quantum physics. In nonlinear optics,  \eqref{eq} describes the propagation of a electromagnetic wave in a periodic waveguide, e.g. photonic crystals (\cite{Buryak, Kuchment, Pankov}). The external potential $V : \R^N \rightarrow \R$ takes into account the linear properties of the material and the nonlinear term $g : \R^N \times \R \rightarrow \R$ is responsible for the polarization of the medium. For instance, in {\em Kerr-like media} one has
$$
g(x,u) = \Gamma(x) |u|^2 u,
$$
and in the {\em saturation effect}, the nonlinear polarization is asymptotically linear and is of the form
$$
g(x,u) = \Gamma(x) \frac{|u|^2}{1+|u|^2}u.
$$
Recently it has been shown that materials with large range of prescribed properties can be created (\cite{Pendry, Shalaev, Smith,Dror,MalomedAzbel}) with different linear and nonlinear effects. Our aim is to model a wide range of nonlinear phenomena that allow to consider a composite of materials with different nonlinear polarization. In our case, the polarization $g(x,\cdot)$ may be linear for some $x\in\R^N\setminus K$ (for sufficiently large $|u|$) and nonlinear outside of it, where $K$ is a given $\Z^N$-periodic subset of $\R^N$. We admit also the fully nonlinear situation with $K=\R^N$.  In particular, we may combine the Kerr-like nonlinearity with a saturation effect, e.g.
$$
g(x,u) = \left\{ 
\begin{array}{ll}
\Gamma(x) |u|^2 u & \quad x \in K, \\
\chi_{\{|u| < 1\}} \frac{|u|^2}{1+|u|^2}u + \chi_{\{|u| \geq 1\}} \frac12 u & \quad x \in \R^N \setminus K,
\end{array} \right.
$$
where $\Gamma \in L^\infty (\R^N)$ is $\Z^N$-periodic, positive and bounded away from $0$ and $\chi$ stands for the characteristic function.

As usual we assume that the potential satisfies the following condition, cf. \cite{Pankov,ReedSimon,AlamaLi,CotiZelatiRabibowitz}:

\begin{enumerate}
\item[(V)] $V \in L^\infty (\R^N)$ is $\mathbb{Z}^N$-periodic and $0$ lies in the spectral gap of $-\Delta + V(x)$. 
\end{enumerate}
Recall that the spectrum of the operator $-\Delta + V(x)$ on $L^2 (\R^N)$, where $V \in L^\infty (\R^N)$ is $\mathbb{Z}^N$-periodic, is purely continuous and consists of pairwise disjoint, closed intervals (\cite{ReedSimon}). Thus we define that a spectral gap is any connected component of $\R \setminus \sigma (-\Delta + V(x))$.

Moreover we suppose that $g : \R^N \times \R \rightarrow \R$ is a Carath\'eodory function such that $x \mapsto g(x,u)$ is $\mathbb{Z}^N$-periodic for a.e. $x \in \R^N$ and for all $u \in \R$, i.e.
$
g(x+z, u) = g(x,u)$ for a.e.  $x \in \R^N$ and all $u \in \R$,
which satisfies the following conditions.
\begin{enumerate}
\item[(G1)] $g(x,u) = o(u)$ for $u \to 0$ uniformly in $x \in \R^N$.
\item[(G2)] There are $C > 0$ and $2 < p < 2^*$, where $2^*=\frac{2N}{N-2}$ for $N \geq 3$ and $2^* = \infty$ for $N \in \{1,2\}$, such that
$$
|g(x,u)| \leq C (1 + |u|^{p-1}) \quad \mbox{ for all } u \in \R \ \mbox{and a.e.} \ x \in \R^N.
$$
\item[(G3)] There exists closed and $\mathbb{Z}^N$-periodic subset $K \subset \R^N$ with $|K|>0$ such that
$$
\frac{G(x,u)}{u^2} \to \infty \quad \mbox{as } |u| \to \infty \mbox{ uniformly in } x \in K,
$$
where $G(x,u) := \int_0^u g(x,s) \, ds$, and $|\cdot|$ denotes the Lebesgue measure.
\item[(G4)] The function $\R \setminus \{0\} \ni u \mapsto \frac{g(x,u)}{|u|}\in \R$ is nondecreasing on $(-\infty,0)$ and on $(0,\infty)$  for a.e. $x \in \R^N$.
\item[(G5)]There is a function $\Theta \in L^\infty (\R^N \setminus K)$ and a constant $a > 0$ such that
$$
\frac{g(x,u)}{u} = \Theta (x) \quad \mbox{for} \ |u| \geq a > 0 \ \mbox{and a.e. } x \in \R^N \setminus K,
$$ 
and $\Theta$ is $\mathbb{Z}^N$-periodic.
\item[(G6)] $0$ is not an eigenvalue of  $-\Delta+V(x)-\Theta(x)$ on $L^2(\R^N\setminus K)$ with the Dirichlet boundary condition.
\end{enumerate}
In particular, (G1), (G4) and (G5) imply that
$$
0\leq \frac{g(x,u)}{u} \leq \Theta (x) \quad \mbox{for a.e.} \ x \in \R^N \setminus K.
$$
We look for weak solution to \eqref{eq} by means of critical points of the strongly indefinite energy functional $\cJ : H^1 (\R^N) \rightarrow \R$ is given by
$$
\cJ(u) := \frac{1}{2} \int_{\R^N} |\nabla u|^2 + V(x) u^2 \, dx - \int_{\R^N} G(x,u) \, dx.
$$
Conditions (G1), (G2), (G4) are standard and considered e.g. in \cite{SzulkinWeth,CotiZelatiRabibowitz,AlamaLi,Mederski2016,Liu} and in the references therein. Assumptions (G3) and (G5) are new and for large $|u|$, (G3) describes super-quadratic behaviour of $G$ in $K$ whereas (G5) provides quadratic behaviour of $G$ outside $K$. In a particular case $K=\R^N$, (G3) reduces to the usual super quadratic condition, (G5) is redundant and  there is a series of results concerning the existence and multiplicity of solutions in this case.
For instance, this situation has been recently considered  by Liu \cite{Liu}, Mederski \cite{Mederski2016}, De Paiva, Kryszewski and Szulkin \cite{dePaivaKryszewskiSzulkin}, and under stronger monotonicity assumption than (G4) by Szulkin and Weth in \cite{SzulkinWeth}. Their proofs are based on a generalized linking theorem that is applied to $\cJ$, or on the minimization method for $\cJ$ on the so-called Nehari-Pankov manifold. 
If $K$ is a proper subset of $\R^N$, then situation is much more delicate, since  in  general $\J(tu)$ may diverge to $+\infty$ or $-\infty$  as $t\to\infty$ for different $u$. It is easy to check that (G1) and (G4) imply that
\begin{equation}\label{AR2}
g(x,u)u \geq  2 G(x,u)  \quad \mbox{for all } u \in \R \mbox{ and for a.e. } x \in \R^N,
\end{equation}
however the Ambrosetti-Rabinowitz condition need not to be satisfied \cite{AR}, hence  we do not know whether any Palais-Smale sequence is bounded.

Observe that (G6) is clearly satisfied if $K=\R^N$. Suppose that $\R^N\setminus K$ is a periodically perforated medium in the following sense: $K=\bigcup_{z\in \Z^N}  (\overline{Q}+z)$, where  $Q$ is a smooth, contractible bounded domain such that $(\overline{Q}+z)\cap \overline{Q}=\emptyset$ for $z\in\Z^N\setminus\{0\}$. Then, in view of a recent result due to Kuchment and Taskinen \cite[Theorem 7]{KuchmentTaskinen}, the spectrum $\sigma(-\Delta+V(x)-\Theta(x))$ on $\R^N\setminus K$ with the Dirichlet boundary conditions is absolutely continuous, hence $-\Delta+V(x)-\Theta(x)$ does not admit any eigenvalue and (G6) is satisfied; see also \cite{KuchmentBAMS,Cardone} and references therein.

We would like to mention that the asymptotically linear case, e.g. the saturation effect, has also been intensively studied (see e.g. \cite{LiuSuWeth, StuartZhou,LiSzulkin,Maia} and references therein). In the positive-definite case, Jeanjean and Tanaka \cite{JeanjeanTanakaESAIM} provided an existence result for $g$ asymptotically linear. Maia, Oliviera and Ruviaro showed that for autonomous and asymptotically linear nonlinearity $g$ in the indefinite, non-periodic case there exists a nontrivial solution \cite{Maia}. Szulkin and Li showed that there is a nontrivial solution for $g$ asymptotically linear in the indefinite, $\Z^N$-periodic, nonautonomous case in \cite{LiSzulkin}.

Observe that, taking $\Theta \equiv 0$, we can consider a nonlinear term of the form
$$
g(x,u) = \chi_K (x) |u|^{p-2}u,
$$
where $2 < p < 2^*$ and $K$ satisfies the foregoing assumptions. An example of such a set is
$$
K = \bigcup_{z \in \mathbb{Z}^N} \left( [0,1/2] + z \right).
$$
In general, $|K| = \infty$. Indeed, since
$|\partial K| = 0$, we see that $|\mathrm{int}\, K| > 0$ and there is an open subset $\Om$ such that $\overline{\Om} \subset \mathrm{int}\, K\cap (0,1)^N$. Hence  $\overline{\Omega}+z \subset \mathrm{int}\, K$ for any $z \in \Z^N$.

Recall that $H^1(\R^N)$ has an orthogonal splitting $X^+\oplus X^-$ such that the quadratic form
$$
u\mapsto \int_{\R^N} |\nabla u|^2 + V(x) |u|^2 \, dx
$$
is positive definite on $X^+$ and negative definite on $X^-$ and if $\inf\sigma(-\Delta+V)<0$, then $\dim X^-=\infty$ and $\cJ$ is strongly indefinite. Similarly as in \cite{Pankov,SzulkinWeth} we introduce 
the so-called Nehari-Pankov manifold
$$
\cN := \big\{ u \in H^1(\R^N)\setminus X^- : \ \cJ'(u)|_{\R u + X^-} = 0 \big\},
$$
which contains all nontrivial critical points of $\cJ$.

Our main results read as follows.

\begin{Th}\label{Th:Existence}
Assume that (G1)--(G5) hold. Then \eqref{eq} has a nontrivial solution $u \in H^1 (\R^N)$ such that $0<\inf_{\cN}\cJ\leq\cJ(u)\leq c$, where $c$ is the minimax level given by \eqref{cm} or \eqref{c}.
\end{Th}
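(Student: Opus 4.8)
The plan is to realize the solution as a critical point of $\cJ$ furnished by a generalized linking theorem for strongly indefinite functionals, relative to the splitting $H^1(\R^N)=X^+\oplus X^-$, and then to upgrade the resulting sequence using the $\Z^N$-invariance of the problem. First I would verify the linking geometry. By (G1)--(G2) there are $\rho,\alpha>0$ with $\cJ\ge\alpha$ on $\{u\in X^+:\|u\|=\rho\}$, whereas (G3) yields, for a suitable $e\in X^+$ with mass concentrated in $K$, the bound $\cJ\le 0$ on the boundary of a large half-cylinder $\{te+w:\ t\ge0,\ w\in X^-\}$, since the super-quadratic growth of $G$ on $K$ forces $\cJ(te+w)\to-\infty$ there. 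This link fixes the min-max level $c$ given by \eqref{cm} or \eqref{c}, with $c\ge\alpha>0$, and the abstract theorem produces a Cerami sequence $(u_n)$, i.e.\ $\cJ(u_n)\to c$ and $(1+\|u_n\|)\,\cJ'(u_n)\to0$ in $H^{-1}(\R^N)$; the Cerami (rather than Palais--Smale) condition is forced by the failure of the Ambrosetti--Rabinowitz condition, only \eqref{AR2} being at hand.

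The crux, and the step I expect to be hardest, is the boundedness of $(u_n)$. Assume $\|u_n\|\to\infty$ and set $v_n:=u_n/\|u_n\|$, so $\|v_n\|=1$, and invoke the Lions dichotomy. In the non-vanishing case there are $z_n\in\Z^N$ with $v_n(\cdot-z_n)\weakto v\neq0$, and by the $\Z^N$-periodicity of $V$, $g$, $K$ and $\Theta$ the translated sequence is of the same kind; I would then prove the contradiction $v\equiv0$ region by region. On $K$: if $v\neq0$ on a set of positive measure then $|u_n|\to\infty$ there, so (G3), the sign $G\ge0$ (a consequence of (G1) and (G4)) and Fatou give $\int_K G(x,u_n)/\|u_n\|^2\to\infty$, hence $\cJ(u_n)/\|u_n\|^2\to-\infty$, contradicting $\cJ(u_n)\to c$; thus $v=0$ a.e.\ on $K$. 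On $\R^N\setminus K$: dividing $\cJ'(u_n)=o(1)$ by $\|u_n\|$ and testing against $\phi\in C_c^\infty(\mathrm{int}(\R^N\setminus K))$, the identity $g(x,u_n)=\Theta(x)u_n$ valid for $|u_n|\ge a$ by (G5), together with $\Theta\in L^\infty$ and dominated convergence, lets me pass to the limit and conclude that $v|_{\R^N\setminus K}$ solves $(-\Delta+V-\Theta)v=0$ weakly with the Dirichlet condition on $\partial K$; by (G6) this operator has trivial kernel, so $v=0$ there as well. In the vanishing case $v_n\to0$ in $L^q$, $2<q<2^*$, and testing $\cJ'(u_n)$ against $u_n^+-u_n^-$, dividing by $\|u_n\|^2$ and estimating the nonlinear term with (G1)--(G2) and \eqref{AR2}, I would obtain $1=\|v_n^+\|^2+\|v_n^-\|^2\to0$, which is absurd. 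This is precisely where the new hypotheses are indispensable: (G3) suppresses concentration on $K$, while (G5)--(G6) prevent the sequence from running off to infinity in the asymptotically linear region $\R^N\setminus K$, where \eqref{eq} degenerates to a linear equation with trivial kernel.

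With $(u_n)$ bounded I would restore compactness through periodicity. A Lions argument rules out vanishing of $(u_n)$ itself, for otherwise (G1)--(G2) would force $\int G(x,u_n)\to0$ and $\cJ(u_n)\to0<c$; hence there are $z_n\in\Z^N$ with $\tilde u_n:=u_n(\cdot-z_n)\weakto\tilde u\neq0$. Since $\cJ$ and $\cJ'$ are invariant under integer shifts, $(\tilde u_n)$ is again a Cerami sequence, and passing to the limit in $\cJ'(\tilde u_n)\phi\to0$ (using (G1)--(G2) and a.e.\ convergence) gives $\cJ'(\tilde u)=0$, so $\tilde u\in\cN$ is a nontrivial weak solution of \eqref{eq}. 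Finally, for the energy bound I would apply Fatou to the nonnegative density $\tfrac12 g(x,u)u-G(x,u)\ge0$ (from \eqref{AR2}) in the identity $\cJ(u)-\tfrac12\cJ'(u)u=\int_{\R^N}\big(\tfrac12 g(x,u)u-G(x,u)\big)\,dx$, which yields $\cJ(\tilde u)\le\liminf_n\cJ(u_n)=c$, while $\tilde u\in\cN$ gives $\cJ(\tilde u)\ge\inf_{\cN}\cJ>0$. This establishes $0<\inf_{\cN}\cJ\le\cJ(\tilde u)\le c$, as required.
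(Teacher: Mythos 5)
Your overall route --- linking geometry for the strongly indefinite $\cJ$, a Cerami sequence at the level \eqref{c}, boundedness via Lions' dichotomy, recovery of a nontrivial critical point by $\Z^N$-translations, and the Fatou/\eqref{AR2} estimate for $\cJ(u)\le c$ --- is the same as the paper's Appendix \ref{sect:appA} combined with Sections \ref{sec:Cerami}--\ref{sect:existence}. However, the step you yourself identify as the crux, boundedness of the Cerami sequence, contains a genuine gap in the \emph{vanishing} case. Testing $\cJ'(u_n)$ against $u_n^+-u_n^-$ and dividing by $\|u_n\|^2$ gives $o(1)=1-\int_{\R^N} g(x,u_n)(v_n^+-v_n^-)/\|u_n\|\,dx$, and this nonlinear term is \emph{not} controlled by (G1)--(G2) together with vanishing: by \eqref{eq:gestimates} it is bounded by $\eps\,|v_n|_2|v_n^+-v_n^-|_2+c_\eps\,\|u_n\|^{p-2}|v_n|_p^{p-1}|v_n^+-v_n^-|_p$, and while vanishing gives $|v_n|_p\to 0$, it gives no rate, so the product $\|u_n\|^{p-2}|v_n|_p^{p-1}$ can be unbounded (equivalently, the term equals $c_\eps|u_n|_p^{p-2}|v_n|_p|v_n^+-v_n^-|_p$ and $|u_n|_p$ is not known to be bounded). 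This is precisely the obstruction caused by the failure of the Ambrosetti--Rabinowitz condition; \eqref{AR2} alone only gives a sign, not smallness. The paper overcomes it with the inequality of Lemma \ref{ineq} (a consequence of the monotonicity (G4), proved by approximating $G$ by $G+\tfrac{\eps}{p}|u|^p$ as in Szulkin--Weth/Mederski): choosing $t=s/\|u_n\|$ and $v=-t u_n^-$ there, so that $tu_n+v=sv_n^+$, the Cerami-type conditions kill the correction terms and one gets $\cJ(u_n)\ge \cJ(sv_n^+)+o(1)=\tfrac{s^2}{2}\|v_n^+\|^2-\int_{\R^N}G(x,sv_n^+)\,dx+o(1)$; vanishing annihilates the $G$-term, and since $\|v_n^+\|^2-\|v_n^-\|^2\ge 2\cJ(u_n)\ge 0$ forces $\|v_n^+\|^2\ge\tfrac12$, one obtains $\cJ(u_n)\ge \tfrac{s^2}{4}+o(1)$ for every $s$, contradicting boundedness of $(\cJ(u_n))$. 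Without this (or an equivalent) device your boundedness proof does not close. Note also that the paper runs the dichotomy on $(v_n^+)$ rather than $(v_n)$, which is what makes the $G$-term above genuinely small.

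A second, smaller gap is in the linking geometry: choosing $e\in X^+$ ``with mass concentrated in $K$'' is not by itself enough to conclude $\cJ(te+w)\to-\infty$ on the half-cylinder, because after normalizing $t_ne+w_n$ the weak limit $se^++w^-$ (with $w^-\in X^-$ arbitrary) could in principle vanish a.e.\ on $K$, and then (G3) plus Fatou yields nothing. The paper rules this out by constructing $\cQ\subset X^+$ inside $H^1_0(\Om)$, $\overline\Om\subset \mathrm{int}\,K$, on which $-\Delta+V$ is positive definite, and proving Lemma \ref{lem:Q}: if $u^+\in\cQ$ and $u\notin X^-$ then $|\supp(u)\cap K|>0$; this lemma is also what makes your claim work and must be supplied. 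Relatedly, the positivity $\inf_{\cN}\cJ>0$ that you invoke at the end is itself derived from the same Lemma \ref{ineq} (via \eqref{eq:JonN}), not from (G1)--(G2) alone. The remaining parts of your argument --- the non-vanishing case of boundedness (Fatou with (G3) on $K$, then (G5)/dominated convergence turning the equation into $(-\Delta+V-\Theta)v=0$ on $\R^N\setminus K$ with Dirichlet conditions, contradicting (G6)), the concentration-compactness step producing $\tilde u\neq 0$ with $\cJ'(\tilde u)=0$, and the Fatou estimate $\cJ(\tilde u)\le c$ --- do match the paper's proof of Lemma \ref{lem:CerBounded} and Theorem \ref{Th:Existence}.
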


We note also that in the case $K = \R^N$, we obtain $c=\inf_{\cN}\J$ and $u$ is a ground state solution, i.e. $u$ minimizes the energy on $\cN$, cf. \eqref{eq:mptlevel}. Hence, $u$ is the least energy solution and we recover results from \cite{Liu,Mederski2016} and also from \cite{SzulkinWeth}, where the stronger monotonicity condition has been assumed.

\begin{Th}\label{ThMain2}
Assume that (G1)--(G5) hold and $g$ is odd in $u$. Then there exists infinitely many pairs $\pm u_n$ of geometrically distinct solutions to \eqref{eq}, i.e. solutions such that $(\Z^N\ast u_n)\cap (\Z^N\ast u_m) =\emptyset$ for $n\neq m$, where
$\Z^N\ast u_n:=\{u_n(\cdot+z):\; z\in\Z^N\}.$
\end{Th}

Under our conditions we show that the energy functional $\cJ$ has the linking geometry and Cerami sequences are bounded. This allows to use a variant of linking theorem obtained in \cite{Mederski2016,LiSzulkin}, to obtain one solution. 
However, the multiplicity of solutions seem to be difficult to obtain by standard methods, e.g. by means of techniques demonstrated in \cite{CotiZelatiRabibowitz,KryszSzulkin,SzulkinWeth,dePaivaKryszewskiSzulkin}. Even if $K=\R^N$ and (G1)--(G4) are satisfied, we present a new approach for \eqref{eq} in comparison to previous works, see \cite{SzulkinWeth,Mederski2016,dePaivaKryszewskiSzulkin,Liu} and references therein. Namely,
in order to obtain the existence of one solution below the minimax level \eqref{cm} as well as infinitely many solutions, we observe that for any $u\in H^1(\R^N)$ the functional $\cJ(u+\cdot)$ is strictly concave on $X^-$, which allows us to reduce the problem to looking for critical points of a reduced functional $\tJ$ on $X^+$. Although the nonlinear term $G$ is not super-quadratic at infinity on the whole space $\R^N$, we can show that $\tJ$ has the mountain pass geometry. It is crucial to construct an infinite dimensional subspace $\cQ \subset X^+$ on which our nonlinear term $\int_{\R^N}G(x,u)\,dx$  is super-quadratic at infinity. In the multiplicity result we use a Cerami-type condition (see $(M)_\alpha^\beta$ in Section \ref{sec:criticaslpoitth}) and a variant of Benci's pseudoindex based on the Krasnoelskii genus. In fact we refine a recent critical point theory from  \cite[Section 3]{MederskiSchinoSzulkin} for strongly indefinite functionals, which do not have to be globally super-quadratic  -- see Section \ref{sec:criticaslpoitth} for details. Note that in our approach in case $K=\R^N$ we do not have to involve a topological generalized Nehari manifold \cite{SzulkinWeth} or set-valued nonsmooth analysis \cite{dePaivaKryszewskiSzulkin}. Moreover we are able to treat a wider range of problems with $K\subset \R^N$.

The paper is organized as follows. The second section consists of an abstract setting which allows us to find a Cerami sequence for $\cJ$ and to show the multiplicity of solutions. In Section \ref{sect:Q} we show our construction of an appropriate subspace $\cQ \subset X^+$ which consists of functions being zero outside of $K$. In  Section \ref{sec:Assumpt} we verify assumptions from the abstract setting and in Section \ref{sec:Cerami} we show the boundedness of Cerami-type sequences. Section \ref{sect:existence}  contains proofs of Theorems \ref{Th:Existence} and \ref{ThMain2} respectively. In Appendix \ref{sect:appA} we provide also another proof of the existence of a nontrivial solution by means of a linking-type theorem.

\section{Critical point theory}\label{sec:criticaslpoitth}

The following abstract setting is recalled from \cite{BartschMederski,BartschMederskiJFA,MederskiSchinoSzulkin}, where  super-quadratic problems have been considered. Our aim is to refine this theory for partially super-quadratic problems as \eqref{eq}.
Let $X$ be a reflexive Banach space with the norm $\|\cdot\|$ and a topological direct sum decomposition $X=X^+\oplus X^-$, where $X^+$ is a Hilbert space with a scalar product $\langle \cdot , \cdot \rangle$. For $u\in X$ we denote by $u^+\in X^+$ and $u^- \in X^-$ the corresponding summands so that $u = u^++u^-$. We may assume $\langle u,u \rangle = \|u\|^2$ for any
$u\in X^+$ and $\|u\|^2 = \|u^+\|^2+\|u^- \|^2$. We introduce the topology $\cT$ on $X$ as the product of the norm topology in $X^+$ and the weak topology in $X^-$. Hence $u_n\cTto u$ if and only if $u_n^+\to u^+$ and $u_n^-\weakto u^-$.

Let $\cJ:X\to\R$ be a functional of the form 
\begin{equation}\label{EqJ}
\cJ(u) := \frac12\|u^+\|^2-\cI(u) \quad\text{for $u=u^++u^- \in X^+\oplus X^-$}
\end{equation}
and let us define the set
\begin{equation}\label{eq:ConstraintM}
\cM := \{u\in X:\, \cJ'(u)|_{X^-}=0\}=\{u\in X:\, \cI'(u)|_{X^-}=0\}.
\end{equation}
Clearly $\cM$ contains all critical points of $\cJ$ and we assume the following conditions introduced in \cite{BartschMederski,BartschMederskiJFA}:
\begin{itemize}
	\item[(I1)] $\cI\in\cC^1(X,\R)$ and $\cI(u)\ge \cI(0)=0$ for any $u\in X$.
	\item[(I2)] $\cI$ is $\cT$-sequentially lower semicontinuous:
	$u_n\cTto u\quad\Longrightarrow\quad \liminf \cI(u_n)\ge \cI(u)$.
	\item[(I3)] If $u_n\cTto u$ and $\cI(u_n)\to \cI(u)$ then $u_n\to u$.
	\item[(I4)] $\|u^+\|+\cI(u)\to\infty$ as $\|u\|\to\infty$.
	\item[(I5)] If $u\in\cM$ then $\cI(u)<\cI(u+v)$ for every $v\in X^- \setminus\{0\}$.
\end{itemize}
Observe that if $\cI$ is strictly convex and satisfies (I4), then (I2) and (I5) clearly hold. Moreover, for any $u\in X^+$ we find  $m(u)\in\cM$ which is the unique global maximizer of $\J|_{u+X^-}$. Note that $m$ needs not be $\cC^1$, and $\cM$ needs not be a differentiable manifold because $\cI'$ is only required to be continuous.
The following properties (i)--(iv) are taken from \cite[Proof of Theorem~4.4]{BartschMederskiJFA} and we observe that they are implied by (I1)--(I5).
\begin{itemize}
	\item[(i)] For each $u^+\in X^+$ there exists a unique $u^-\in X^-$ such that $m(u^+):=u^++u^-\in\cM$. This $m(u^+)$ is the minimizer of $\cI$ on $u^++X^-$. 
	\item[(ii)] $m:X^+\to \cM$  is a homeomorphism with the inverse $\cM\ni u\mapsto u^+\in X^+$. 
	\item[(iii)] $\wt{\cJ}:=\cJ\circ m \in\cC^1(X^+,\R)$.
	\item[(iv)]$\wt{\cJ}'(u^+) = \cJ'(m(u^+))|_{X^+}:X^+\to\R$ for every $u^+\in X^+$.
\end{itemize}
In order to get the mountain pass geometry of $\wt{\cJ}$ we need some additional assumptions.
\begin{itemize}
	\item[(I6)] There exists $r>0$ such that $a:=\inf\limits_{u\in X^+,\|u\|=r} \cJ(u)>0$.
	\item[(I7)] There is an infinite dimensional closed vector subspace $\cQ \subset X^+$ such that $\cI(m(t_nu_n))/t_n^2\to\infty$ if $t_n\to\infty$, $u_n \in \cQ$ and $u_n\to u\ne 0$ as $n\to\infty$.
\end{itemize}

Note that in the previous works \cite{BartschMederskiJFA,MederskiSchinoSzulkin}, instead of (I7), the following stronger condition  has been assumed:
\begin{equation}\label{eq:I7strong}
\cI(t_nu_n)/t_n^2\to\infty\hbox{ if }t_n\to\infty,\; u_n \in X\hbox{ and }u_n^+\to u^+\ne 0\hbox{ as }n\to\infty.
\end{equation}
Since our problem \eqref{eq} is not super quadratic outside $K$, we require only (I7) and we introduce $\cQ$ containing functions with support in $K$.\\ 
\indent Recall from \cite{BartschMederskiJFA} that $(u_n)$ is called a $(PS)_c$-sequence for $\cJ$ if $\cJ'(u_n)\to 0$ and $\cJ(u_n)\to c$, and $\cJ$ satisfies the $(PS)_c^\cT$-condition on $\cM$ if each $(PS)_c$-sequence $(u_n)\subset\cM$ has a subsequence converging in the $\cT$-topology. Since we look for solutions to \eqref{eq} in $\R^N$ and not in a bounded domain as in \cite{BartschMederskiJFA}, the $(PS)_{c}^\cT$-condition is no longer satisfied.\\ 
\indent Now our approach is similar to \cite{MederskiSchinoSzulkin}, but we need to work with the weaker condition than \eqref{eq:I7strong}.
Note that by (I5) and (I6),
$\tJ(u)\geq \J(u)\geq a$ for $u\in X^+$ and $\|u\|=r$, and  $\tJ(tu)/t^2\to -\infty$ as $t\to\infty$ for  $u\in \cQ$. Therefore $\tJ$ has the mountain pass geometry and similarly as in \cite[Theorem 4.4]{BartschMederskiJFA} we may define the mountain pass level
 \begin{equation}\label{cm}
 c_\cM := \inf_{\ga\in\Ga}\sup_{t\in [0,1]} \cJ(\ga(t)),
 \end{equation}
 where
 \[
 \Ga := \{\ga\in\cC([0,1],\cM) \ : \ \ga(0)=0, \ \|\gamma(1)^+\|>r,\text{ and } \cJ(\ga(1))<0\}.
 \]
In order to show that $c_{\cM}\ge a>0$ we require the following condition on $\cI$:
 \begin{itemize}
 	\item[(I8)] $\frac{t^2-1}{2}\cI'(u)[u]+\cI(u) - \cI(tu+v)=\frac{t^2-1}{2}\cI'(u)[u] + t\cI'(u)[v] + \cI(u) - \cI(tu+v) \leq 0$\\ for every $u\in \cN$, $t\ge 0$, $v\in X^-$,
 \end{itemize}
 where 
 \begin{equation}\label{eq:NehariDef}
 \cN := \{u\in X\setminus X^-: \cJ'(u)|_{\R u+ X^-}=0\} = \{u\in\cM\setminus X^-: \cJ'(u)[u]=0\} \subset\cM.
 \end{equation}

\begin{Th}\label{ThLink1}
	Suppose $\cJ \in \cC^1(X,\R)$ satisfies (I1)--(I8). Then $\tJ$ has a Cerami sequence $(u_n)$ at the level $c_\cM$ and
 \begin{equation}\label{eq:mptlevel}
0<a\leq c_\cM\leq \inf_{\ga\in\Ga,\, \gamma([0,1])\subset m(\cQ)}\sup_{t\in [0,1]} \cJ(\ga(t))=\inf_{\cN\cap m(\cQ)}\cJ=\inf_{u\in\cQ}\sup_{t\geq 0} \J(m(tu)).
 \end{equation}
\end{Th}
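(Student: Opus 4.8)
**The plan is to establish the mountain pass structure first, then handle the chain of (in)equalities in \eqref{eq:mptlevel} one link at a time.** The functional $\tJ$ has the mountain pass geometry: by (I5) and (I6) we have $\tJ(u)\ge a>0$ on the sphere $\{u\in X^+:\|u\|=r\}$, and along any ray $tu$ with $u\in\cQ$ the definition of $m$ together with (I7) gives $\tJ(tu)=\J(m(tu))\le \tfrac{t^2}{2}\|u\|^2-\cI(m(tu))\to-\infty$ as $t\to\infty$. Standard mountain pass theory (in the Cerami form, as in \cite[Theorem~4.4]{BartschMederskiJFA}) then yields a Cerami sequence $(u_n)\subset X^+$ for $\tJ$ at the level $c_\cM$. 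To transfer this to a Cerami sequence for $\cJ$ on $\cM$ one sets $v_n:=m(u_n)$ and uses property (iv), $\tJ'(u_n)=\cJ'(m(u_n))|_{X^+}$, together with the fact that on $\cM$ one has $\cJ'(v_n)|_{X^-}=0$ by definition; hence the full differential $\cJ'(v_n)$ is controlled by $\tJ'(u_n)$, giving a Cerami sequence for $\cJ$.

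\textbf{Next I would prove the lower bound} $0<a\le c_\cM$. The inequality $c_\cM\ge a$ follows because every path $\ga\in\Ga$ starts at $\ga(0)=0$ (so $\J(\ga(0))=0$) and satisfies $\|\ga(1)^+\|>r$ while $\J(\ga(1))<0$; by continuity the projected path $t\mapsto\ga(t)^+$ must cross the sphere $\|u^+\|=r$, and at such a crossing point $u_0=\ga(t_0)$ one has, using (I5) and then (I6),
\[
\J(\ga(t_0))=\tfrac12\|\ga(t_0)^+\|^2-\cI(\ga(t_0))\ge \J(\ga(t_0)^+)\ge a,
\]
where the first inequality uses that $\cI(u)\le\cI(u^-+u^+\text{ shifted})$ type monotonicity in the $X^-$-direction granted by (I5) (the point $m(\ga(t_0)^+)$ minimizes $\cI$ over $\ga(t_0)^++X^-$). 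Thus $\sup_{t}\J(\ga(t))\ge a$ for every admissible $\ga$, giving $c_\cM\ge a$.

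\textbf{For the upper bound and the two equalities} I would exploit the homeomorphism $m:X^+\to\cM$. Since paths of the restricted form $\gamma([0,1])\subset m(\cQ)$ are a subclass of $\Ga$, the middle inequality $c_\cM\le\inf_{\gamma\subset m(\cQ)}\sup_t\J(\ga(t))$ is immediate from the definition \eqref{cm}. The key equality $\inf_{\gamma\subset m(\cQ)}\sup_t\J(\ga(t))=\inf_{u\in\cQ}\sup_{t\ge0}\J(m(tu))$ follows by the standard mountain-pass-equals-Nehari argument: for each fixed $u\in\cQ$ the map $t\mapsto\J(m(tu))$ attains a unique maximum (this is where condition (I8) enters, guaranteeing that on $\cN$ the value $\J$ dominates $\J(tu+v)$ so that the maximum over the ray is the Nehari value), and the maximizing parameter produces a point of $\cN\cap m(\cQ)$; conversely every element of $\cN\cap m(\cQ)$ lies on such a ray. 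Reparametrizing these rays gives admissible paths realizing the infimum, yielding both the equality with $\inf_{\cN\cap m(\cQ)}\J$ and with $\inf_{u\in\cQ}\sup_{t\ge0}\J(m(tu))$.

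\textbf{The main obstacle} I expect is the careful use of the weakened hypothesis (I7) in place of the stronger \eqref{eq:I7strong}. Because the nonlinearity is only super-quadratic on $\cQ$ (functions supported in $K$) and merely quadratic outside, one cannot assert $\J(m(tu))\to-\infty$ for arbitrary $u\in X^+$; the mountain pass geometry and the finiteness of the min-max level must be derived using only directions in $\cQ$. Verifying that (I8) still forces the single-peak behaviour of $t\mapsto\J(m(tu))$ along these restricted rays, so that the Nehari characterization survives, is the delicate point where one must check that the reduction functional $\tJ$ retains enough structure despite the loss of global super-quadraticity.
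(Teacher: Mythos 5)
Your proposal follows essentially the same route as the paper's proof: mountain pass geometry of $\tJ$ from (I5)--(I7), the lower bound $c_\cM\ge a$ via the observation that any $\ga\in\Ga$ projects to a path in $X^+$ crossing the sphere $\|u\|=r$ where $\tJ\ge\J\ge a$, the inequality $c_\cM\le\inf_{\ga\subset m(\cQ)}\sup_t\cJ(\ga(t))$ by restriction of the path class, and the identification of the restricted min-max level with the Nehari level through (I8). Those parts are correct. However, two points are glossed in a way that matters. First, your claim that $t\mapsto\J(m(tu))$ ``attains a unique maximum'' is not available under the stated hypotheses: (I8) encodes only non-strict monotonicity (coming from (G4)), so along a ray only the maximum \emph{value} is unique; it may be attained on a whole interval. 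The paper's proof is organized precisely around this: it shows $m(tu)\in\cN$ if and only if $t\in[t_{min},t_{max}]$, that $\tJ(tu)$ is constant there (a consequence of \eqref{eq:JonN}, i.e.\ of (I8)), and that $\tJ'(tu)[u]>0$ for $t<t_{min}$ and $\tJ'(tu)[u]<0$ for $t>t_{max}$.

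Second, and this is the genuine gap: your explicit argument for the equality $\inf_{\ga\in\Ga,\,\ga([0,1])\subset m(\cQ)}\sup_t\cJ(\ga(t))=\inf_{\cN\cap m(\cQ)}\cJ$ only yields ``$\le$'' --- reparametrized rays through Nehari points are admissible paths, so the path level does not exceed the Nehari level. The reverse inequality requires showing that \emph{every} admissible path with image in $m(\cQ)$ intersects $\cN$, so that its sup is at least $\inf_{\cN\cap m(\cQ)}\cJ$; this is exactly where the paper uses the sign structure above to conclude that $\cQ\setminus m^{-1}(\cN)$ consists of two connected components, with $\sigma(0)=0$ on one side and $\sigma(1)$ (where $\tJ(\sigma(1))<0$, while $\tJ>0$ on the inner component and on the plateau) on the other, forcing every $\sigma\in\wt\Gamma$ to cross $m^{-1}(\cN)$. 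Without this step the chain \eqref{eq:mptlevel} is not established. A minor additional caution: the theorem only claims a Cerami sequence for $\tJ$; your further assertion that $(m(u_n))$ is a Cerami sequence for $\cJ$ is premature at this stage, since $(1+\|m(u_n)\|)\cJ'(m(u_n))\to 0$ requires control of the $X^-$-components, which in the paper is only obtained later via Lemma \ref{lem:CerBounded}.
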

\begin{proof}
Observe that for every $u\in \cQ\setminus\{0\}$,  the map $(0,+\infty)\ni t\mapsto \tJ(tu)\in\R$ attains maximum at some point $t_0>0$ and $\tJ'(t_0u)(u)=0$. Hence $m(t_0u)\in\cN$ and $\cN\cap m(\cQ)\neq\emptyset$.
Note that (I8) is equivalent to
\begin{equation}\label{eq:JonN}
\cJ(tu+v)-\cJ'(u)\Big[\frac{t^2-1}{2}u+tv\Big]\leq \cJ(u)
\end{equation}
for any $u\in \cN$, $v\in X^-$ and $t\geq 0$. 
Hence, if $u\in X^+$, $m(t_0u),m(t_1u)\in \cN$ for some $t_0,t_1>0$, then by \eqref{eq:JonN}, $\wt{\cJ}(t_1u) =\wt{\cJ}(t_2u)$. Suppose that $u\in m^{-1}(\cN)$.
Then there exist $0 < t_{min} \le 1 \leq  t_{max}$ such that $m(tu)\in\cN$ if and only if $t\in[t_{min},t_{max}]$ and $\wt{\cJ}(tu)$ has the same value for those $t$. Hence $\wt{\cJ}'(tu)[u]>0$ for $0<t<t_{min}$ and $\wt{\cJ}'(tu)[u]<0$ for $t>t_{max}$. It follows that $\cQ\setminus m^{-1}(\cN)$ consists of two connected components and any $\sigma\in \wt{\Gamma}$ intersects $m^{-1}(\cN)$, where
\begin{equation}\label{eq:Gamma_Phi}
\wt{\Gamma} := \{\sigma\in\cC([0,1],\cQ) \ : \ \sigma(0)=0, \ \|\sigma(1)\|>r \hbox{ and }\tJ(\sigma(1))<0\}.
\end{equation}
Thus
$$\inf_{\sigma\in\wt{\Gamma}}\sup_{t\in [0,1]}\cJ\circ m(\sigma(t))\geq \inf_{\cN\cap m(\cQ)}\J.$$
Note that
$$c_{\cM}\leq \inf_{\sigma\in\wt{\Gamma}}\sup_{t\in [0,1]}\cJ\circ m(\sigma(t))\leq \inf_{\cN\cap m(\cQ)}\J = \inf_{u\in \cQ\setminus\{0\}}\max_{t>0}\tJ(tu)$$
and we conclude \eqref{eq:mptlevel}.
By the mountain pass theorem there exists a Cerami sequence $(u_n)$ for $\tJ$ at the level $c_\cM\geq a$ (see \cite{Cerami,bbf}).
\end{proof}

In order to deal with multiplicity of critical point, we introduce a discrete group action on $X$, e.g. in our application to \eqref{eq} we have $G=\mathbb{Z}^N$ acting by translations, see Theorem \ref{ThMain2}.\\
\indent For a topological group acting on $X$, denote \emph{the orbit of $u\in X$} by $G\ast u$, i.e., 
$$G\ast u:=\{gu \ : \ g\in G\}.$$
A set $A\subset X$ is called \emph{$G$-invariant} if $gA\subset A$ for all $g\in G$. $\cJ: X\to\R$ is called \emph{$G$-invariant} and $T: X\to X^*$ \emph{$G$-equivariant} if $\cJ(gu)=\cJ(u)$ and $T(gu)=gT(u)$ for all $g\in G$, $u\in X$. \\
\indent   
In order to deal with  multiplicity of critical points, assume that $G$ is a topological group such that
\begin{itemize}
	\item[(G)] $G$ acts on $X$ by isometries and discretely in the sense that for each $u\ne 0$, $(G*u)\setminus\{u\}$ is bounded away from $u$. Moreover, $\cJ$ is $G$-invariant and $X^+, X^-$ are $G$-invariant.
\end{itemize}
Observe that $\cM$ is $G$-invariant and $m:X^+\to\cM$ is $G$-equivariant.  

\begin{Lem}[\cite{MederskiSchinoSzulkin}] \label{discrete}
	For all $u,v\in X$ there exists $\eps=\eps_{u,v}>0$ such that $\|gu-hv\|>\eps$ unless $gu=hv$, where $g,h\in G$.
\end{Lem}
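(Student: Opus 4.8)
The plan is to exploit the two features of hypothesis (G): that each $g\in G$ acts as an isometry (hence preserves $\|\cdot\|$ and is invertible as a group element), and that the orbit of any \emph{nonzero} point is uniformly separated from that point. First I would reduce the two-parameter statement to a one-parameter one. Since each $h\in G$ preserves the metric and $G$ is a group, for any $g,h\in G$,
\[
\|gu-hv\| = \|h^{-1}gu - h^{-1}hv\| = \|h^{-1}gu - v\|,
\]
and $gu=hv$ if and only if $h^{-1}gu=v$. Writing $k:=h^{-1}g$, which ranges over all of $G$ as $g,h$ vary, the claim becomes: for all $u,v\in X$ there is $\eps>0$ with $\|ku-v\|>\eps$ whenever $ku\neq v$; equivalently, the orbit $G\ast u$ stays a definite distance away from $v$.

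Next I would argue by contradiction. Suppose no such $\eps$ exists; then there is a sequence $(k_n)\subset G$ with $k_n u\neq v$ and $\|k_n u - v\|\to 0$. The degenerate sub-case $u=0$ is immediate, since the action is linear (as for the translations in the application), so $k_n u=0$ for all $n$ and $\|k_n u - v\|\equiv\|v\|$ cannot tend to $0$ unless $v=0=k_n u$. So assume $u\neq 0$. From $k_n u\to v$ the sequence $(k_n u)$ is Cauchy, and applying the isometry $k_m^{-1}$ gives, for all $n,m$,
\[
\|k_m^{-1}k_n u - u\| = \|k_n u - k_m u\| \le \|k_n u - v\| + \|v - k_m u\|.
\]
By hypothesis (G), the orbit of $u\neq 0$ is separated: there is $\delta>0$ with $\|gu-u\|\ge\delta$ for every $g\in G$ such that $gu\neq u$.

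Now I would fix a single index $m$ with $\|k_m u - v\|<\delta/3$ and let $n\to\infty$. For all large $n$ the right-hand side above is $<\delta$, which forces $\|k_m^{-1}k_n u - u\|<\delta$; by the separation property this is possible only if $k_m^{-1}k_n u = u$, i.e. $k_n u = k_m u$ for all large $n$. Thus $(k_n u)$ is eventually constant and, since $k_n u\to v$, this constant equals $v$, contradicting $k_n u\neq v$. Hence the infimum of $\|ku-v\|$ over $\{k: ku\neq v\}$ is strictly positive, which, after the reduction above, yields the desired $\eps=\eps_{u,v}$.

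The step I expect to be the main obstacle is precisely the bridge between hypothesis and conclusion: condition (G) controls only the distance from an orbit point $gu$ to the \emph{base point} $u$ of that same orbit, whereas the statement concerns the distance to a possibly different point $v$. The Cauchy-plus-isometry maneuver that converts ``$k_n u$ accumulates at $v$'' into ``$k_m^{-1}k_n u$ accumulates at $u$'' is what makes the single-orbit discreteness applicable, and it relies essentially on $G$ being a \emph{group} acting by isometries.
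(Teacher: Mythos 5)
Your proof is correct, and in fact this paper never proves Lemma \ref{discrete} at all: it is quoted from \cite{MederskiSchinoSzulkin}, and your argument --- apply the isometry $h^{-1}$ to reduce to a single group element $k=h^{-1}g$, then combine the Cauchy property of a sequence $(k_nu)$ accumulating at $v$ with the discreteness of the orbit $G\ast u$ via $\|k_m^{-1}k_nu-u\|=\|k_nu-k_mu\|$ --- is essentially the proof given in that reference. One small remark on your degenerate case: you invoke linearity of the action, which (G) does not state explicitly, but this is harmless, since $0\in X^+$ and $0\in X^-$ give $g0\in X^+\cap X^-=\{0\}$ by the $G$-invariance of $X^\pm$ and the direct sum $X=X^+\oplus X^-$, so $g0=0$ for every $g\in G$ without assuming linearity.
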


We shall use the notation 
\begin{gather*}
\tJ^\beta := \{u\in X^+ \ : \ \tJ(u)\le\beta\}, \quad  \tJ_\alpha := \{u\in X^+\ : \  \tJ(u)\ge\alpha\}, \\
\tJ_\alpha^\beta := \tJ_\alpha\cap\tJ^\beta,\quad \cK:=\big\{u\in X^+\ : \ \tJ'(u)=0\big\}.
\end{gather*}
Note that by \eqref{eq:JonN}
$$\cJ(u)\geq \cJ\left(\frac{r}{\|u^+\|}u^+\right)\geq a$$
for any $u\in\cN$, hence $\inf_{\cN}\J\geq a>0$.
Since all nontrivial critical points of $\cJ$ are in $\cN$, $\tJ(u)\ge a$ for all $u\in\cK\setminus\{0\}$.

We recall the following variant of the {\em Cerami condition} between the levels $\alpha, \beta\in\R$ introduced in \cite{MederskiSchinoSzulkin}.
\begin{itemize}
	\item[$(M)_\alpha^\beta$]
	\begin{itemize}
		\item[(a)] Let $\alpha\le\beta$. There exists $M_\alpha^\beta$ such that $\limsup_{n\to\infty}\|u_n\|\le M_\alpha^{\beta}$ for every $(u_n)\subset X^+$ satisfying $\alpha\le\liminf_{n\to\infty}\tJ(u_n)\le\limsup_{n\to\infty}\tJ(u_n)\leq\beta$ and \linebreak $(1+\|u_n\|)\tJ'(u_n)\to 0$.
		\item[(b)] Suppose in addition that the number of critical orbits in $\tJ_\alpha^\beta$ is finite. Then there exists $m_\alpha^\beta>0$ such that if $(u_n), (v_n)$ are two sequences as above and $\|u_n-v_n\|<m_\alpha^\beta$ for all $n$ large, then  $\liminf_{n\to\infty}\|u_n-v_n\|=0$.
	\end{itemize}
\end{itemize}

Note that if $\cJ$ is even, then $m$ is odd (hence $\tJ$ is even) and $\cM$ is symmetric,  i.e. $\cM=-\cM$. Note also that $(M)_\alpha^\beta$ is a condition on $\tJ$ and \emph{not} on $\cJ$. Our main multiplicity result reads as follows.
\begin{Th}\label{Th:CrticMulti}
	Suppose $\cJ \in \cC^1(X,\R)$ satisfies (I1)--(I8) $\cJ$ is even. If $(M)_0^{\beta}$ holds for every $\beta>0$,
	then $\cJ$ has infinitely many distinct critical orbits.
\end{Th}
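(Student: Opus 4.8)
The plan is to deduce the existence of infinitely many distinct critical orbits of $\cJ$ from those of the reduced functional $\tJ$ on $X^+$, exploiting the homeomorphism $m:X^+\to\cM$ and the fact (properties (iii)--(iv)) that $m$ carries critical points of $\tJ$ to critical points of $\cJ$ while preserving the $G$-action. Since $\cJ$ is even, $m$ is odd and $\tJ$ is even, so the Krasnoselskii genus together with a Benci-type pseudoindex built on the $G$-equivariant structure becomes available. The overall strategy mirrors \cite[Section 3]{MederskiSchinoSzulkin}, but one must check that the weaker hypothesis (I7), rather than the global super-quadratic condition \eqref{eq:I7strong}, still supplies everything the abstract machinery needs; this is exactly where the subspace $\cQ$ enters.

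First I would fix the functional-analytic setup: by (I1)--(I8) and Theorem \ref{ThLink1}, $\tJ$ has the mountain pass geometry, $\tJ(u)\ge a$ on $\cK\setminus\{0\}$, and the minimax level $c_\cM$ is finite and positive. I would then construct an increasing sequence of minimax values
\[
c_k := \inf_{A\in \Sigma_k}\ \sup_{u\in A}\tJ(u),
\]
where $\Sigma_k$ is a suitable family of symmetric subsets of $X^+$ of pseudoindex at least $k$, the pseudoindex being defined relative to the sphere of radius $r$ from (I6) and the genus. The infinite-dimensional subspace $\cQ$ from (I7) is used to produce, for each $k$, a symmetric set on which $\tJ$ is bounded above by a finite constant $\beta_k$; this is the step where (I7) replaces the stronger \eqref{eq:I7strong}, because the super-quadratic behaviour needed to push the energy down to $-\infty$ along rays is only available on $\cQ$. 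Thus one obtains $0<a\le c_1\le c_2\le\cdots$ with each $c_k<\infty$.

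Next I would invoke the Cerami condition $(M)_0^\beta$. For each level $c_k$, part (a) of $(M)_0^\beta$ guarantees boundedness of Cerami sequences, and a standard equivariant deformation lemma (adapted so that the deformation respects the $G$-action and the $\cT$-topology, as in \cite{MederskiSchinoSzulkin}) shows that $c_k$ is a critical value of $\tJ$. The key quantitative tool is Lemma \ref{discrete}: because distinct orbits $G*u$ are uniformly separated, part (b) of $(M)_0^\beta$ prevents Cerami sequences from approaching two different orbits simultaneously, which rules out the degeneracy that would otherwise destroy the pseudoindex count. The classical alternative then applies: either the $c_k$ are strictly increasing, yielding infinitely many distinct critical values and hence infinitely many distinct orbits, or some value $c_k=c_{k+1}=\cdots=c_{k+j}$ repeats with multiplicity, in which case the genus of the critical set at that level is at least $j+1$, forcing infinitely many critical points there; combined with the orbit separation from Lemma \ref{discrete}, this again produces infinitely many distinct critical orbits of $\tJ$. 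Transporting these back through $m$ gives infinitely many distinct critical orbits of $\cJ$.

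The main obstacle I expect is establishing the equivariant deformation and the pseudoindex monotonicity under the weaker hypothesis (I7). Because $\cI$ is only super-quadratic on the subspace $\cQ$ and merely quadratic outside $K$, the usual arguments that guarantee $\sup_{A}\tJ<\infty$ for high-dimensional symmetric sets $A$, and that the pseudoindex increases as one enlarges $A$, must be localized to $\cQ$; one has to verify that $\cQ$ is large enough (infinite dimensional and $G$-invariant in an appropriate sense) to generate symmetric sets of arbitrarily large genus while keeping $\tJ$ bounded above. Together with the noncompactness of $\R^N$ — which is why the $(PS)^\cT$-condition fails and is replaced by $(M)_0^\beta$ — this is the delicate part; everything else is a careful adaptation of the critical point theory from \cite[Section 3]{MederskiSchinoSzulkin} to the present, only partially super-quadratic, setting.
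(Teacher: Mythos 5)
Your overall strategy---a Benci-type pseudoindex relative to the sphere $S(0,r)$ built from the Krasnoselskii genus, minimax values made finite by intersecting $k$-dimensional subspaces $\cQ_k\subset\cQ$ with large balls (this is exactly how (I7) replaces \eqref{eq:I7strong}, via Lemma \ref{infty} and Lemma \ref{index}(iv)), and the Cerami-type condition $(M)_0^\beta$ combined with the orbit separation of Lemma \ref{discrete}---coincides with the paper's. The gap is in the logical structure. You argue directly: you claim that ``a standard equivariant deformation lemma \dots shows that $c_k$ is a critical value of $\tJ$'', and then handle repeated values by a genus estimate on the critical set. No such standard deformation lemma is available in this setting. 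Since the problem is posed on $\R^N$, the $(PS)_c^\cT$-condition fails, and $(M)_0^\beta$(a) yields only \emph{boundedness} of Cerami sequences, not convergence. The entire deformation machinery---part (b) of $(M)_0^\beta$, the flow dichotomy of Lemma \ref{lem:flow}, the entrance time map $e$ and the resulting homeomorphism $h=\wt\eta(T,\cdot)\in\cH$---is valid only under the additional hypothesis that the number of critical orbits is finite; indeed this hypothesis is written into $(M)_0^\beta$(b) itself. That is precisely why the paper proves the theorem \emph{by contradiction}: assuming finitely many critical orbits, Lemma \ref{discrete} gives a uniform $\delta$-separation of $\cK^\beta$, \eqref{eq:entrancetime1} holds, the entrance-time deformation lies in $\cH$, and the pseudoindex inequality \eqref{eq:LSvaluse} forces $\cK^{\beta_k}\neq\emptyset$ and $\beta_k<\beta_{k+1}$, producing infinitely many critical values and hence the contradiction. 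Without the standing contradiction hypothesis, your step ``each $c_k$ is a critical value'' is unsupported.

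A second, related problem is your repeated-value branch: from $c_k=c_{k+j}$ you infer that the critical set at that level has genus at least $j+1$ and so contains infinitely many points. But $\cK^{c_k}$ is invariant under the $\Z^N$-action, hence unbounded and non-compact, so the genus arguments you implicitly invoke (continuity/subadditivity on compact sets) do not apply to it directly. The paper sidesteps this entirely: under the finiteness assumption, $\cK^\beta$ is a finite union of countable orbits, hence countable, so $\gamma(\overline B(\cK^\beta,\delta))=\gamma(\cK^\beta)=1$, and \eqref{eq:LSvaluse} then shows that $\beta_k=\beta_{k+1}$ is \emph{impossible}---the multiplicity alternative is used in the opposite direction. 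The fix for your proposal is therefore structural: assume from the outset that $\cK$ has finitely many orbits, run your pseudoindex scheme to obtain a strictly increasing infinite sequence of critical values, and conclude by contradiction, exactly as the paper does.
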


If $\cQ=X^+$, 
the above result has been obtained in \cite[Theorem 3.5 (b)]{MederskiSchinoSzulkin} and proof of Theorem \ref{Th:CrticMulti} is similar. For the reader's convenience we recall some important steps and we prove results, where $\cQ\subset X^+$ and (I7) play an important role.

\begin{Lem} \label{infty}
	Let $\cQ_k$ be a $k$-dimensional subspace of $\cQ$. Then $\tJ(u)\to-\infty$ whenever $\|u\|\to\infty$ and $u\in \cQ_k$.
\end{Lem}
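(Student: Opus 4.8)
The goal is to show that on any finite--dimensional subspace $\cQ_k\subset\cQ$ the reduced functional $\tJ$ is anti--coercive, i.e. $\tJ(u)\to-\infty$ as $\|u\|\to\infty$ with $u\in\cQ_k$. The plan is to argue by contradiction and exploit the super--quadratic condition (I7) that is specifically designed for elements of $\cQ$. Suppose the conclusion fails. Then there is a sequence $(u_n)\subset\cQ_k$ with $\|u_n\|=t_n\to\infty$ but $\tJ(u_n)\ge -C$ for some constant $C$. Writing $u_n=t_n w_n$ with $\|w_n\|=1$ and $w_n\in\cQ_k$, I would use finite--dimensionality of $\cQ_k$ to pass to a subsequence with $w_n\to w$, where $\|w\|=1$, so in particular $w\neq 0$; note that $w\in\cQ_k\subset\cQ$ since $\cQ_k$ is a closed (finite--dimensional) subspace.

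The next step is to translate the bound on $\tJ$ into a bound that contradicts (I7). Recall $\tJ=\cJ\circ m$ and, by \eqref{EqJ}, $\cJ(m(t_nw_n))=\tfrac12\|m(t_nw_n)^+\|^2-\cI(m(t_nw_n))$. Since $m(t_nw_n)^+=t_nw_n$ by property (ii) (the inverse of $m$ is the projection onto $X^+$), we have $\|m(t_nw_n)^+\|^2=t_n^2\|w_n\|^2=t_n^2$. Therefore
\begin{equation*}
\tJ(t_nw_n)=\tfrac12 t_n^2-\cI(m(t_nw_n)).
\end{equation*}
Dividing by $t_n^2$ and using the assumed lower bound $\tJ(t_nw_n)\ge -C$ gives
\begin{equation*}
\frac{\cI(m(t_nw_n))}{t_n^2}=\frac12-\frac{\tJ(t_nw_n)}{t_n^2}\le \frac12+\frac{C}{t_n^2}\longrightarrow \frac12,
\end{equation*}
so $\cI(m(t_nw_n))/t_n^2$ stays bounded. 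But (I7) applied with $t_n\to\infty$, $w_n\in\cQ$ and $w_n\to w\neq 0$ asserts precisely that $\cI(m(t_nw_n))/t_n^2\to\infty$, a contradiction. This completes the argument.

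The one point requiring care, and the main (though mild) obstacle, is verifying that the hypotheses of (I7) are met exactly as stated: one must confirm that $w_n$ indeed lies in $\cQ$ (guaranteed since $\cQ_k$ is a subspace of $\cQ$), that $w_n\to w$ in norm with $w\neq 0$ (guaranteed by compactness of the unit sphere in the finite--dimensional space $\cQ_k$ together with $\|w\|=1$), and that the scaling is the multiplicative one $t_nw_n$ appearing in (I7). The identity $\|m(u)^+\|=\|u^+\|$ from property (ii) is what lets me compute the quadratic part $\tfrac12\|m(t_nw_n)^+\|^2$ cleanly; this is the step that makes the super--quadraticity of $\cI$ along $m(\cQ)$ override the quadratic term and force $\tJ\to-\infty$. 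Finite dimensionality is essential here precisely to extract a convergent direction $w_n\to w\neq0$, which is the input (I7) needs.
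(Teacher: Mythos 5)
Your proof is correct and takes essentially the same approach as the paper's: both write $u_n=t_nw_n$ with $\|w_n\|=1$, use finite-dimensionality of $\cQ_k$ to extract $w_n\to w\neq 0$, exploit the identity $\tJ(t_nw_n)=\tfrac12 t_n^2-\cI(m(t_nw_n))$ (via $m(t_nw_n)^+=t_nw_n$), and invoke (I7). The only cosmetic difference is your contradiction framing, whereas the paper directly concludes $\tJ(u_n)/t_n^2\le \tfrac12-\cI(m(t_nw_n))/t_n^2\to-\infty$ along subsequences.
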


\begin{proof}
	It suffices to show that each sequence $(u_n)\subset \cQ_k$ such that $\|u_n\|\to\infty$ contains a subsequence along which $\tJ(u_n)\to-\infty$.  Let $u_n = t_nv_n$,  $\|v_n\|=1$.
Then, passing to a subsequence, $v_n \to v_0$, $v_0 \in \cQ$ and $\|v_0\|=1$. Hence by (I7)
$$
\frac{\tJ(u_n)}{t_n^2}\leq \frac12 - \frac{\cI(m(t_nv_n))}{t_n^2}\to-\infty
$$
as $n\to\infty$.
\end{proof}

As usual, $(u_n)\subset X^+$ will be called \emph{a Cerami sequence} for $\tJ$ at the level $c$ if $(1+\|u_n\|)\tJ'(u_n) \to 0$ and $\tJ(u_n) \to c$. In view of (I4), it is clear that if $(u_n)$ is a bounded Cerami sequence for $\tJ$, then $(m(u_n))\subset \cM$ is a bounded Cerami sequence for $\cJ$.

By a standard argument we can find
a locally Lipschitz continuous pseudo-gradient vector field $v:X^+\setminus \cK\to X^+$ associated with $\wt{\cJ}$, i.e.
\begin{eqnarray}
\|v(u)\|&<&1,\label{eq:flow2}\\
\wt{\cJ}'(u)[v(u)] &>& \frac12\|\wt{\cJ}'(u)\|\label{eq:flow3}
\end{eqnarray}
for any $u\in X^+\setminus \cK$. Moreover, if $\J$ is even, then $v$ is odd.  
Let
$\eta:\cG\to  X^+\setminus \cK$ be the flow defined by
\begin{equation*}
\left\{
\begin{aligned}
&\partial_t \eta(t,u)=-v(\eta(t,u))\\
&\eta(0,u)=u
\end{aligned}
\right.
\end{equation*}
where $\cG:=\{(t,u)\in [0,\infty)\times (X^+\setminus \cK) \ : \ t<T(u)\}$ and $T(u)$ is the maximal time of existence of $\eta(\cdot,u)$.  We prove Theorem \ref{Th:CrticMulti} by contradiction and
from now on we assume that there is a finite number of distinct orbits $\{G\ast u: \ u\in \cK\}$. Recall the following lemma.

\begin{Lem}[\cite{MederskiSchinoSzulkin}]\label{lem:flow}
	Suppose $(M)_0^\beta$ holds for some $\beta>0$ and let $u \in \tJ_0^\beta\setminus \cK$. Then either $\lim_{t \to T(u)}\eta(t,u)$ exists and is a critical point of $\wt{\cJ}$ or $\lim_{t\to T(u)}\wt{\cJ}(\eta(t,u)) = -\infty$. In the latter case $T(u)=\infty$.
\end{Lem}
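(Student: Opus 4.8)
The plan is to reduce everything to the behaviour of the scalar function $t\mapsto\tJ(\eta(t,u))$. First I would differentiate along the flow: by \eqref{eq:flow2}--\eqref{eq:flow3},
$$
\frac{d}{dt}\tJ(\eta(t,u))=-\tJ'(\eta(t,u))\big[v(\eta(t,u))\big]\le-\tfrac12\|\tJ'(\eta(t,u))\|\le 0 ,
$$
so $t\mapsto\tJ(\eta(t,u))$ is nonincreasing and bounded above by $\tJ(u)\le\beta$. Hence $c:=\lim_{t\to T(u)}\tJ(\eta(t,u))\in[-\infty,\beta]$ exists, and the two alternatives in the statement correspond to $c=-\infty$ and $c>-\infty$. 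Integrating the same inequality yields the energy estimate
$$
\tfrac12\int_0^{t}\|\tJ'(\eta(\tau,u))\|\,d\tau\le \tJ(u)-\tJ(\eta(t,u))\le \tJ(u)-c ,
$$
so whenever $c>-\infty$ the gradient is integrable over the whole trajectory.

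Next I would dispose of the easy cases. If $T(u)<\infty$, then since $\|\partial_t\eta\|=\|v(\eta)\|<1$ we get $\|\eta(t,u)-\eta(s,u)\|\le|t-s|$, so $\eta(t,u)$ is Cauchy as $t\to T(u)$ and converges to some $w\in X^+$. Were $w\notin\cK$, the field $v$ would be defined and locally Lipschitz near $w$ and the flow could be continued beyond $T(u)$, contradicting maximality; hence $w\in\cK$ and $\tJ(w)=c$ is finite. In particular a finite $T(u)$ forces $c>-\infty$, which proves that in the second alternative ($c=-\infty$) one necessarily has $T(u)=\infty$, and simultaneously settles the first alternative whenever $T(u)<\infty$.

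It remains to treat $T(u)=\infty$ with $c>-\infty$, the heart of the lemma. The energy estimate now gives $\int_0^\infty\|\tJ'(\eta(\tau,u))\|\,d\tau<\infty$, hence $\liminf_{t\to\infty}\|\tJ'(\eta(t,u))\|=0$ and there is $t_n\to\infty$ with $\tJ'(\eta(t_n,u))\to0$ and $\tJ(\eta(t_n,u))\to c$. Since $\tJ\ge a>0$ on $\cK\setminus\{0\}$ and $\tJ(0)=0$, there are no critical values in $(-\infty,0)$, so a finite negative $c$ can be folded into the $\tJ\to-\infty$ branch and I may assume $c\in[0,\beta]$; then $(\eta(t_n,u))$ is a Cerami sequence in $\tJ_0^\beta$, which is bounded by $(M)_0^\beta$(a). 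To promote this to convergence of the whole trajectory I would argue by contradiction: if $\eta(t,u)$ does not converge, then, after using Lemma \ref{discrete} and the standing finiteness of critical orbits in $\tJ_0^\beta$ to fix a gauge $\eps_0\in(0,m_0^\beta)$, the trajectory must travel a distance of order $\eps_0$ over suitable intervals $[s_n,t_n]$ on which the energy drop tends to $0$, hence on which $\int_{s_n}^{t_n}\|\tJ'(\eta)\|\,d\tau\to0$. Choosing $s_n,t_n$ inside the good time-set, where $\|\tJ'(\eta)\|$ is small and whose complement has finite measure by the integral bound, produces two bounded Cerami sequences $u_n:=\eta(s_n,u)$ and $v_n:=\eta(t_n,u)$ at level $c$ with $0<\|u_n-v_n\|<m_0^\beta$, contradicting $(M)_0^\beta$(b). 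Thus $\eta(t,u)\to w$, and since $\tJ'(\eta(t_n,u))\to0$ the continuity of $\tJ'$ forces $\tJ'(w)=0$, i.e. $w\in\cK$.

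The step I expect to be the main obstacle is precisely this last one: the bounds $\|v\|<1$ and $\tJ'[v]>\tfrac12\|\tJ'\|$ control energy dissipation but decouple it from path length, so a priori $\eta(t,u)$ could drift at unit speed along an almost-critical valley without settling. Condition $(M)_0^\beta$(b), together with the discreteness of critical orbits, is exactly the device that forbids such wandering, but the delicate bookkeeping is to select the times $s_n<t_n$ so that \emph{both} $u_n$ and $v_n$ are genuine bounded Cerami sequences while remaining at a distance strictly between $0$ and $m_0^\beta$; this is where the finite total measure of $\{t:\|\tJ'(\eta(t,u))\|\ge\delta\}$ and the separation of critical orbits must be combined carefully. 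A secondary point to dispatch cleanly is the sign of $c$, so that $(M)_0^\beta$ is applicable with lower level $0$.
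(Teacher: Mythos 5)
Your overall architecture is the right one, and it coincides with the argument in \cite{MederskiSchinoSzulkin} (the paper itself gives no proof of this lemma, it only cites that source): monotonicity of $t\mapsto\tJ(\eta(t,u))$, the case $T(u)<\infty$ settled by the Lipschitz bound coming from \eqref{eq:flow2} plus maximality of $T(u)$, and, when $T(u)=\infty$ with finite energy limit $c$, an integral estimate producing almost-critical times, then $(M)_0^\beta$(a) for boundedness and $(M)_0^\beta$(b) together with the standing finiteness of critical orbits to force convergence of the whole trajectory. The genuine gap is your treatment of a finite \emph{negative} limit. The second alternative of the lemma is exactly $\lim_{t\to T(u)}\tJ(\eta(t,u))=-\infty$; a finite $c<0$ does not satisfy it, so it cannot be ``folded into the $\tJ\to-\infty$ branch''. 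Worse, the fact you invoke --- that $\tJ$ has no critical values in $(-\infty,0)$, since $\tJ(0)=0$ and $\tJ\ge a$ on $\cK\setminus\{0\}$ --- cuts the other way: if $c<0$ and $\eta(t,u)$ converged to some $w$, then $\tJ(w)=c<0$, so $w\notin\cK$ and the \emph{first} alternative fails as well. Hence the lemma is true only if the case $c\in(-\infty,0)$ is shown to be impossible, and your proof contains no argument for this. Note that $(M)_0^\beta$ as stated cannot supply one: it only constrains sequences with $\liminf\tJ\ge 0$, while the portion of the trajectory at nonnegative energy is a compact time interval carrying no asymptotic information; nothing in your hypotheses visibly forbids a trajectory drifting to infinity along a region where $\tJ$ decreases to a finite negative value with $\|\tJ'\|$ small. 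This is precisely the delicate new point in passing from the classical setting, where the reduced functional is bounded below by the ground-state level (cf. \cite{SzulkinWeth}), to the present one where $\tJ$ is unbounded below. To close the gap one needs Cerami-boundedness below the level $0$ as well --- in the concrete setting of the paper this is available, because the proof of Lemma \ref{lem:CerBounded} uses the hypothesis $\cJ(u_n)\ge 0$ only through the consequence $\|v_n^+\|^2-\|v_n^-\|^2\ge 2\cJ(u_n)/\|u_n\|^2$, which tends to $0$ at \emph{any} finite level once $\|u_n\|\to\infty$ --- but this has to be argued, not waved away.

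A second, smaller defect is a circularity in the main case: to invoke $(M)_0^\beta$(a) you need times $t_n\to\infty$ with $(1+\|\eta(t_n,u)\|)\tJ'(\eta(t_n,u))\to 0$, whereas your integral bound only yields $\|\tJ'(\eta(t_n,u))\|\to 0$, and upgrading the latter to the former presupposes the boundedness you are trying to prove. This is fixable: $\int_0^\infty\|\tJ'(\eta(\tau,u))\|\,d\tau<\infty$ forces $\liminf_{t\to\infty}t\,\|\tJ'(\eta(t,u))\|=0$, and $\|\eta(t,u)\|\le\|u\|+t$ by \eqref{eq:flow2}, so indeed $\liminf_{t\to\infty}(1+\|\eta(t,u)\|)\|\tJ'(\eta(t,u))\|=0$. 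The same care is needed for the two sequences you feed into $(M)_0^\beta$(b), although there the weighted condition is inherited because they stay at bounded distance from the first, already bounded, sequence. Your remaining bookkeeping for the $(M)_0^\beta$(b) contradiction --- pinching the mutual distance of the two Cerami sequences between two positive constants below $m_0^\beta$ by working in an annulus around $\eta(t_n,u)$ --- is the standard and correct way to finish.
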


Similarly as in \cite{MederskiSchinoSzulkin}, let
$\Sigma := \{A\subset X^+ \ : \ A=-A \text{ and } A \text{ is compact}\}$,
\[
\cH := \{h \ : \ X^+\to X^+ \text{ is a homeomorphism, }  h(-u)=-h(u) \text{ and } \wt{\cJ}(h(u))\le \wt{\cJ}(u) \text{ for all } u\},
\]
and for $A\in\Sigma$ we put
\[
i^*(A) := \min_{h\in\cH} \gamma(h(A)\cap S(0,r)),
\]
where $S(0,r):=\{u\in X^+ \ : \ \|u\|=r\}$ and $\gamma$ is Krasnoselskii's genus \cite{Struwe}. This is a variant of Benci's  pseudoindex \cite{bbf, be} and the following properties are adapted from \cite[Lemma 2.16]{SquassinaSzulkin}.

\begin{Lem} \label{index}
	Let $A,B\in\Sigma$.\\
	(i) If $A\subset B$, then $i^*(A)\le i^*(B)$. \\
	(ii) $i^*(A\cup B)\le i^*(A)+\gamma(B)$. \\
	(iii) If $g\in \cH$, then $i^*(A)\le i^*(g(A))$. \\
	(iv) Let $\cQ_k$ be a $k$-dimensional subspace of $\cQ$ given in (I7). Then $i^*(D_k)\ge k$, where $D_k:=\cQ_k\cap \overline B(0,R)$ and $R$ is large enough.

\end{Lem}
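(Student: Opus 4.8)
The plan is to treat (i)--(iii) as formal consequences of the standard properties of Krasnoselskii's genus together with the group structure of $\cH$, and to concentrate the real work on (iv), where (I7) and Lemma~\ref{infty} enter.

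For (i), given $A\subset B$ I would pick $h_0\in\cH$ realizing $i^*(B)=\gamma(h_0(B)\cap S(0,r))$; then $h_0(A)\cap S(0,r)\subset h_0(B)\cap S(0,r)$, and monotonicity of $\gamma$ gives $i^*(A)\le i^*(B)$. For (ii) I would take $h_0$ realizing $i^*(A)$ and use subadditivity of the genus,
$$\gamma\big((h_0(A)\cap S(0,r))\cup(h_0(B)\cap S(0,r))\big)\le\gamma(h_0(A)\cap S(0,r))+\gamma(h_0(B)\cap S(0,r)),$$
together with the fact that an odd homeomorphism preserves genus, so that $\gamma(h_0(B)\cap S(0,r))\le\gamma(h_0(B))=\gamma(B)$. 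For (iii) the point is that $\cH$ is closed under composition: if $g,h\in\cH$ then $h\circ g$ is again an odd homeomorphism and $\tJ(h(g(u)))\le\tJ(g(u))\le\tJ(u)$, so $h\circ g\in\cH$; ranging $h$ over $\cH$ in the definition of $i^*(g(A))$ thus only runs through maps of the form $h\circ g\in\cH$, whence $i^*(g(A))\ge i^*(A)$.

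The substance is (iv). First I would record the two sublevel facts that drive the argument: on the sphere $\tJ\ge a>0$ by (I6) and $\tJ\ge\cJ$ on $X^+$, whereas $\tJ\to-\infty$ on $\cQ_k$ by Lemma~\ref{infty}; since $\cQ_k$ is finite dimensional, $\sup_{\cQ_k\cap S(0,R)}\tJ\to-\infty$ as $R\to\infty$, so I fix $R$ so large that $\tJ<a$ on $\partial D_k=\cQ_k\cap S(0,R)$. As every $h\in\cH$ satisfies $\tJ(h(u))\le\tJ(u)$, this forces $\tJ(h(u))<a\le\inf_{S(0,r)}\tJ$ for $u\in\partial D_k$, i.e. $h(\partial D_k)\cap S(0,r)=\emptyset$. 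Using that $h$ is an odd homeomorphism and that genus is invariant under odd homeomorphisms, I would reduce the claim to a statement inside $\cQ_k\cong\R^k$:
$$\gamma\big(h(D_k)\cap S(0,r)\big)=\gamma(\Sigma_0),\qquad \Sigma_0:=\{u\in D_k:\ \|h(u)\|=r\},$$
where $\Sigma_0$ is a compact symmetric subset of $\cQ_k$ with $0\notin\Sigma_0$ and $\Sigma_0\cap\partial D_k=\emptyset$.

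The hard part is the remaining genus lower bound $\gamma(\Sigma_0)\ge k$, which is a Borsuk--Ulam/linking phenomenon. The idea is that $\Sigma_0$ separates $0$ (where $\|h\|=0<r$) from $\partial D_k$ inside $D_k$, so that the connected symmetric component $\mathcal{O}$ of $\{u\in\mathrm{int}\,D_k:\ \|h(u)\|<r\}$ containing $0$ is a bounded symmetric open neighborhood of $0$ in $\R^k$ whose boundary lies in $\Sigma_0$; since the boundary of such a neighborhood has genus exactly $k$, monotonicity yields $\gamma(\Sigma_0)\ge k$ and hence $i^*(D_k)\ge k$. The delicate point --- and the main obstacle --- is to guarantee this separation, namely that $\partial D_k$ lies in the exterior region $\{\|h(\cdot)\|>r\}$ rather than being collapsed by $h$ toward the origin. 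Here I would use, beyond Lemma~\ref{infty}, that $\tJ$ is bounded below on bounded subsets of $X^+$ (which holds in our application through the subcritical growth (G2), since then $\cI$ is bounded on bounded sets), so that the very negative value $\tJ(h(u))\le\tJ(u)$ on $\partial D_k$ cannot be attained inside $\overline B(0,r)$, forcing $\|h(u)\|>r$ there. With the separation secured, the radial map $u\mapsto t_u u$, where $t_u$ is the first crossing of $\|h(tu)\|=r$ along the ray through $u$, is an odd continuous map $\partial D_k\to\Sigma_0$, giving $\gamma(\Sigma_0)\ge\gamma(\partial D_k)=k$ directly.
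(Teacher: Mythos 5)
Your treatment of (i)--(iii) is correct and is the standard argument (monotonicity and subadditivity of the genus, its invariance under odd homeomorphisms, and closedness of $\cH$ under composition); the paper simply cites \cite[Lemma 3.7]{MederskiSchinoSzulkin} for these items. For (iv), your main argument is essentially the paper's proof in different packaging. The paper considers $U:=h^{-1}(B(0,r))\cap\cQ_k$, proves $U\subset D_k$ by playing $\tJ(h(u))\le\tJ(u)$ against Lemma \ref{infty} (together with the assertion that $\tJ\ge 0$ on $B(0,r)$, which in the application follows from \eqref{eq:gestimates}), notes that $U$ is then a bounded symmetric open neighbourhood of $0$ in $\cQ_k$ with $h(\partial U)\subset S(0,r)$, and gets a contradiction with Borsuk--Ulam from the odd map $f:h(D_k)\cap S(0,r)\to\R^{k-1}\setminus\{0\}$ supplied by the assumption $i^*(D_k)<k$. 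Your set $\cO$ is a component of the same set intersected with $\mathrm{int}\,D_k$, your separation statement $\|h(u)\|>r$ on $\partial D_k$ plays exactly the role of the paper's inclusion $U\subset D_k$, and the fact that the boundary of a bounded symmetric open neighbourhood of $0$ in $\R^k$ has genus $k$ is precisely the Borsuk--Ulam input; arguing directly for every $h\in\cH$ rather than by contradiction is a cosmetic difference. Your way of securing the separation --- a lower bound for $\tJ$ on $\overline B(0,r)$ via boundedness of $\cI$ on bounded sets, coming from (G2) --- differs slightly from the paper's sign condition $\tJ\ge 0$ on $B(0,r)$, but both are properties verified in the application rather than formal consequences of (I1)--(I8), so this is an acceptable (arguably more robust) variant.

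The one genuine defect is your closing sentence: the first-crossing map $u\mapsto t_uu$ from $\partial D_k$ to $\Sigma_0$ need not be continuous, so it cannot be used to conclude $\gamma(\Sigma_0)\ge\gamma(\partial D_k)$ ``directly''. The first hitting time $t_u=\inf\{t>0:\ \|h(tu)\|=r\}$ is lower semicontinuous but in general not upper semicontinuous: if $t\mapsto\|h(tu)\|$ touches the level $r$ tangentially and only crosses it at a later time, then for nearby rays the first crossing jumps forward. Fortunately this step is redundant: your component argument already gives $\partial\cO\subset\Sigma_0$ (this is where the separation on $\partial D_k$ is used), hence
\begin{equation*}
\gamma\big(h(D_k)\cap S(0,r)\big)=\gamma\big(h(\Sigma_0)\big)=\gamma(\Sigma_0)\ge\gamma(\partial\cO)=k
\end{equation*}
for every $h\in\cH$, which is the desired bound. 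Simply delete the radial-map remark; nothing else in your proof depends on it.
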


\begin{proof}
	(i)--(iii) are proved in  \cite[Lemma 3.7]{MederskiSchinoSzulkin}.\\
\indent (iv) By Lemma \ref{infty}, $\tJ(u)<0$ on $\cQ_k\setminus B(0,R)$ if $R$ is large enough. Let $D_k := \cQ_k\cap\overline B(0,R) \neq \{0\}$ and note that $D_k \subset X^+$ is compact and symmetric, i.e. $D_k \in \Sigma$. Suppose $i^*(D_k)<k$, choose $h\in\cH$ such that $\gamma(h(D_k)\cap S(0,r))<k$ and an odd mapping 
	$$f: h(D_k)\cap S(0,r) \to \R^{k-1}\setminus\{0\}.$$ 
	Let $U:= h^{-1} (B(0,r)) \cap \cQ_k$. 
	Observe that $\wt{\cJ}(h(u))\le \wt{\cJ}(u)< 0$ for $u\in \cQ_k\setminus B(0,R)$ and $\wt{\cJ}(u)\ge 0$ for $u\in B(0,r)$. Suppose that there is $u \in U$ such that $u \in \cQ_k\setminus B(0,R)$. Since $h(u) \in B(0,r)$ we have
	$$
	0 \leq \wt{\cJ}(h(u)) \leq \wt{\cJ}(u)< 0,
	$$
	which is a contradiction. Hence $U\subset D_k$. Since $h$ is a homeomorphism we see that $U$ is open in $\cQ_k$. Since $U \subset D_k$, we see that $U$ is bounded and $0 \in U$. Therefore $U$ is bounded, open neighbourhood of $0$ in $\cQ_k$. If $u\in \partial U$, then $h(u)\in S(0,r)$ and therefore $f\circ h: \partial U \to \R^{k-1}\setminus\{0\}$, contradicting the Borsuk-Ulam theorem \cite[Proposition II.5.2]{Struwe}, \cite[Theorem D.17]{Willem}. Therefore $i^*(D_k)\ge k$.

\end{proof}

\begin{altproof}{Theorem \ref{Th:CrticMulti}}
	Take $\beta\geq a$ and let
	\begin{align*}
		\cK^\beta := \{u\in \cK \ : \  \wt{\cJ}(u)=\beta\}.
	\end{align*}
	Since there are finitely many critical orbits, there exists $\eps_0>0$ for which 
	\begin{equation}\label{eq:KL}
	\cK\cap \wt{\cJ}_{\beta-\eps_0}^{\beta+\eps_0}=\cK^\beta.
	\end{equation} 
	Choose $\delta\in (0,m_0^{\beta+\eps_0})$ such that $\overline B(u,\delta)\cap \overline B(v,\delta) = \emptyset$ for all $u,v\in\cK^\beta$, $u\ne v$ (this is possible due to Lemma \ref{discrete}). Similarly as in \cite{MederskiSchinoSzulkin} we show there is $\eps\in (0,\eps_0)$ such that
	\begin{equation}\label{eq:entrancetime1}
	\lim_{t\to T(u)} \wt{\cJ}(\eta(t,u)) < \beta -\eps \quad\hbox{for } u\in \wt{\cJ}^{\beta+\eps}_{\beta-\eps}\setminus B(\cK^\beta,\delta).
	\end{equation}
	Define 
	\[
	\beta_k :=  \inf_{i^*(A)\ge k} \sup_{u\in A}\wt{\cJ}(u), \quad k=1,2,\ldots.
	\]
	and note that by Lemma \ref{index}  all $\beta_k$ are well defined, finite and $a\le\beta_1\le\beta_2\le\ldots$.
	Let $\beta=\beta_k$  for some $k\geq 1$ and
	take $\eps>0$ such that \eqref{eq:entrancetime1} holds. 
	As in \cite{MederskiSchinoSzulkin} we define the flow $\wt\eta:\R\times X^+\to  X^+$ such that $\wt\eta(t,u)=\eta(t,u)$ as long as $t\ge 0$ and $ \wt\eta(t,u)\in \wt{\cJ}^{\beta+\eps}_{\beta-\eps}\setminus B(\cK^\beta,\delta/2)$. Now, using \eqref{eq:entrancetime1} we can define the entrance time map $e:\wt{\cJ}^{\beta+\eps}\setminus B(\cK^\beta,\delta)\to [0,\infty)$ by
	$$e(u):=\inf\{t\in [0,\infty) \ : \  \wt{\cJ}(\wt\eta(s,u))\leq \beta -\eps\}.$$ 
	Then $e(u)$ is finite and it is standard to show that $e$ is continuous and even. Take any $A\in\Sigma$ such that $i^*(A)\geq k$ and $\wt{\cJ}(u)\leq \beta+\eps$ for $u\in A$. Let $T:=\sup_{u\in A} e(u)$ and set $h:=\wt\eta(T,\cdot)$. Observe that $h\in\cH$,
	$$i^*(A\setminus B(\cK^\beta,\delta))\leq 
	i^*(h(A\setminus B(\cK^\beta,\delta)))\leq  k-1$$
	and
	\begin{equation}\label{eq:LSvaluse}
	k\leq i^*(A)\leq \gamma (\overline B(\cK^\beta,\delta)\cap A)+
	i^*(A\setminus B(\cK^\beta,\delta))
	\leq \gamma(\cK^\beta)+k-1.
	\end{equation}
	Thus $\cK^\beta\neq \emptyset$ and $\cK^\beta$ is (at most) countable, so that 
	$$\gamma (\overline B(\cK^\beta,\delta))=\gamma (\cK^\beta)=1.$$
	If $\beta_k=\beta_{k+1}$ for some
	$k\geq 1$, then \eqref{eq:LSvaluse} implies $\gamma(\cK^{\beta_k})\geq 2$, which is a contradiction. Therefore we get an infinite sequence
	$\beta_1<\beta_2<...$
	of critical values which contradicts our assumption that $\cK$ consists of a finite number of distinct orbits. 
	This completes the proof.	
\end{altproof}

\section{Variational setting and construction of $\cQ$}\label{sect:Q}

In view of (V), the Schr\"odinger operator $\cA := -\Delta + V(x) : \cD(\cA) \rightarrow L^2 (\R^N)$ in $L^2 (\R^N)$ is self-adjoint and its domain is $\cD(\cA) = H^2 (\R^N) \subset L^2 (\R^N)$. We set
$$
X := H^1 (\R^N)
$$
with the orthogonal splitting $X = X^+ \oplus X^-$. 
On $X$ we consider the norm given by
$$
\|u\|^2 := \int_{\R^N} |\nabla u^+|^2 + V(x) |u^+|^2 \, dx - \int_{\R^N} |\nabla u^-|^2 + V(x) |u^-|^2 \, dx = \|u^+\|^2 + \|u^-\|^2
$$
and the corresponding scalar product
$$
\langle u, v\rangle := \int_{\R^N} \nabla u^+  \nabla v^+ + V(x) u^+ v^+ \, dx - \int_{\R^N} \nabla u^-  \nabla v^- + V(x) u^- v^- \, dx,
$$
where $u = u^+ + u^- \in X^+ \oplus X^-$. Moreover we can rewrite $\cJ$ in the following form
$$
\cJ (u) = \frac12 \|u^+\|^2 - \frac12 \|u^-\|^2 - \int_{\R^N} G(x,u) \, dx
$$
for $u = u^+ + u^- \in X^+ \oplus X^-$. Then $\mathcal{J} \in \cC^1 (X)$ and critical points of $\cJ$ are weak solutions to \eqref{eq}. If in addition $\inf \sigma (-\Delta + V(x)) > 0$, we have $X^- = \{0\}$ and $X^+ = H^1 (\R^N)$. Otherwise $X^-$ is an infinite dimensional subspace of $X$.

Take any open subset $\Om$ such that $\overline{\Om} \subset \mathrm{int}\, K\cap (0,1)^N$. Since the operator $-\Delta+V(x)$ on $H^{1}_0(\Om)$ has a discrete and unbounded from above spectrum, we define an infinite dimensional  subspace $\cQ$ of $ H^{1}_0(\Om)$ such that $-\Delta+V(x)$ is positive definite on $\cQ$. Clearly $\cQ\subset X^+$ and $\supp (u)\subset  \overline{\Om} $ for $u\in\cQ$. Observe that, if $u_n \to u$ in $X^+$ and $u_n \in \cQ$, we have $u = 0$ a.e. on $\R^N \setminus \overline{\Om} $. Thus, taking $\overline{\cQ}$ instead of $\cQ$ we may assume that $\cQ$ is closed and $\supp(u)\subset K$ for every $u \in \cQ$. We observe the following crucial property of  $\cQ$.

\begin{Lem} \label{lem:Q}
 If $u\in X \setminus X^-$  is such that $u^+\in \cQ$, then 
 $|\supp(u)\cap K|>0$. 
\end{Lem}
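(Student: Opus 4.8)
The plan is to argue by contradiction, exploiting the orthogonality of the spectral splitting $X=X^+\oplus X^-$ with respect to the quadratic form of $\cA=-\Delta+V$ together with the localization of $u^+\in\cQ$ inside $\overline{\Om}\subset\mathrm{int}\,K$. Throughout I write $b(v,w):=\int_{\R^N}\nabla v\cdot\nabla w+V(x)vw\,dx$ for the bilinear form associated with $\cA$. The two structural facts I will rely on are: (a) since $X^+,X^-$ are the spectral subspaces of $\cA$, they are $b$-orthogonal, i.e.\ $b(v^+,w^-)=0$ for $v^+\in X^+$, $w^-\in X^-$, and $b(v^+,v^+)=\|v^+\|^2$ for $v^+\in X^+$; (b) since $\cQ\subset H^1_0(\Om)$, every $u^+\in\cQ$ together with its weak gradient vanishes a.e.\ on $\R^N\setminus\Om$.

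First, since $u\notin X^-$ we have $u^+\neq 0$, and by positive definiteness of the norm on $X^+$ this gives $\|u^+\|^2=b(u^+,u^+)>0$. Next, suppose for contradiction that $|\supp(u)\cap K|=0$. Then $u=0$ a.e.\ on $K$, and in particular on the open set $\Om\subset\mathrm{int}\,K$; since $u\in H^1(\R^N)$ and $\Om$ is open, the weak gradient $\nabla u$ also vanishes a.e.\ on $\Om$.

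Finally, combining the $b$-orthogonality with the localization of $u^+$ I would compute
$$
\|u^+\|^2=b(u^+,u^+)=b(u^+,u^+)+b(u^+,u^-)=b(u^+,u)=\int_{\Om}\nabla u^+\cdot\nabla u+V u^+ u\,dx=0,
$$
where the middle equality adds $b(u^+,u^-)=0$, the fourth uses linearity, the penultimate uses that $u^+$ and $\nabla u^+$ vanish off $\Om$, and the last uses that $u$ and $\nabla u$ vanish a.e.\ on $\Om$. This contradicts $\|u^+\|^2>0$, and hence $|\supp(u)\cap K|>0$.

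The only delicate point is item (a): the $b$-orthogonality of $X^+$ and $X^-$ is not an extra hypothesis but a feature of the spectral decomposition of $\cA$ — indeed it is precisely what makes the identity $\|u\|^2=\|u^+\|^2+\|u^-\|^2$ (with opposite signs of $b$ on the two summands) hold — so I would invoke it directly from the variational setup recalled earlier. Everything else is routine: the passage from $|\supp(u)\cap K|=0$ to $u\equiv 0$ on $\Om$ up to a null set, and the standard fact that a Sobolev function vanishing a.e.\ on an open set has a.e.\ vanishing weak gradient there.
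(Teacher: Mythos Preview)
Your proof is correct and takes a more direct route than the paper's. Both argue by contradiction from $u=0$ a.e.\ on $K$, but the paper never isolates the $b$-orthogonality $b(u^+,u^-)=0$ up front. Instead it exploits the pointwise identity $u^+=-u^-$ on $K$ to rewrite $\|u^+\|^2=\int_K\bigl(|\nabla u^+|^2+V|u^+|^2\bigr)\,dx$ as $\int_K\bigl(|\nabla u^-|^2+V|u^-|^2\bigr)\,dx$, then unwinds this through $-\|u^-\|^2-\int_{\R^N\setminus K}(\cdots)\,dx$ and finally uses $\int_{\R^N}\bigl(|\nabla u|^2+Vu^2\bigr)\,dx=\|u^+\|^2-\|u^-\|^2$ --- which is exactly where the $b$-orthogonality enters, implicitly --- to reach $\|u^+\|^2=-\|u^+\|^2$. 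Your approach, adding $b(u^+,u^-)=0$ at the start to get $\|u^+\|^2=b(u^+,u)$ and then observing that the integrand vanishes a.e.\ since $u^+,\nabla u^+$ are supported in $\Om$ while $u,\nabla u$ vanish there, cuts out the intermediate algebra and makes the role of the spectral orthogonality transparent. The gain is brevity and conceptual clarity; the paper's chain has the minor cosmetic advantage of working entirely with the set $K$ rather than the auxiliary domain $\Om$.
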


\begin{proof}
Assume by contradiction that $u = u^+ + u^- \in X \setminus X^-$ is such that $u^+ \in \cQ$ and $u = 0$ a.e. on $K$. Then $u^+ = - u^-$ a.e. on $K$ and since $u^+ \in \cQ$ we have $u^+ = 0$ a.e. on $\R^N \setminus K$. Moreover $\nabla u^+ = 0$ a.e. on $\R^N \setminus K$ and we obtain that
\begin{align*}
\|u^+\|^2 &= \int_K |\nabla u^+|^2 + V(x) |u^+|^2 \, dx = \int_K |\nabla u^-|^2 + V(x) |u^-|^2 \, dx \\
&= - \|u^-\|^2 - \int_{\R^N \setminus K} |\nabla u^-|^2 + V(x) |u^-|^2 \, dx \\
&= - \|u^-\|^2 - \int_{\R^N \setminus K} |\nabla u|^2 + V(x) |u|^2 \, dx \\
&= - \|u^-\|^2 - \int_{\R^N} |\nabla u|^2 + V(x) |u|^2 \, dx = - \|u^-\|^2 - (\|u^+\|^2-\|u^-\|^2) = - \|u^+\|^2.
\end{align*}
Hence $\|u^+\| = 0$, $u = u^- \in X^-$ and we get a contradiction. Therefore $|\{x\in K: u(x)\neq 0\}|>0$, which implies that
$|\supp(u)\cap K|>0$. 
\end{proof}

\section{Verification of (I1)--(I8)}\label{sec:Assumpt}

Define $\cI (u) := \frac12 \|u^-\|^2 + \int_{\R^N} G(x,u) \, dx$ for $u \in X$.  Note that $\cJ(u) = \frac12 \|u^+\|^2 - \cI(u)$ is of the form \eqref{EqJ}.
Then in view of (G1) and (G2), for any $\eps>0$ we find $c_\eps>0$ such that
\begin{equation}\label{eq:gestimates}
|g(x,u)|\leq \eps |u| + c_\eps |u|^{p-1}.
\end{equation}
Hence $\cJ$ is of $\cC^1$ class and by direct computation we obtain $\cI(0)=0$ and (I1) holds. 

\begin{Lem}\label{lem:convex}
$\cI$ is convex and $\cI(u+\cdot)$ is strictly convex on $X^-$ for every $u\in X$.
\end{Lem}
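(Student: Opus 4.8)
The plan is to split the claim into two parts and prove convexity of $\cI$ first, then upgrade to strict convexity of $\cI(u+\cdot)$ on $X^-$. Recall that $\cI(u) = \frac12\|u^-\|^2 + \int_{\R^N} G(x,u)\,dx$. The map $u\mapsto \frac12\|u^-\|^2$ is convex because $u\mapsto u^-$ is linear (it is the projection onto $X^-$) and $\|\cdot\|^2$ is convex; in fact it is strictly convex in the $u^-$-direction since $\|\cdot\|^2$ is strictly convex on the Hilbert space $X^-$. So the essential work is the convexity of the nonlinear functional $u\mapsto \int_{\R^N} G(x,u)\,dx$, and for this it suffices to show that $u\mapsto G(x,u)$ is convex in $u\in\R$ for a.e.\ $x$, since convexity is preserved by integration against the (positive) Lebesgue measure.

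First I would establish pointwise convexity of $G(x,\cdot)$. Since $G(x,\cdot)$ is $\cC^1$ with $\partial_u G(x,u) = g(x,u)$, it is enough to check that $u\mapsto g(x,u)$ is nondecreasing in $u$ for a.e.\ $x$. This is exactly what (G4) delivers, together with (G1): for $u>0$ we have $g(x,u) = |u|\cdot\frac{g(x,u)}{|u|} = u\cdot\frac{g(x,u)}{|u|}$, and (G4) says $u\mapsto \frac{g(x,u)}{|u|}$ is nondecreasing on $(0,\infty)$, so the product of the nonnegative nondecreasing factor $u$ with the nondecreasing factor $\frac{g(x,u)}{|u|}$ is nondecreasing; a symmetric argument handles $u<0$ (writing $g(x,u) = u\cdot\frac{g(x,u)}{|u|}\cdot\operatorname{sign}$-bookkeeping, using that $\frac{g(x,u)}{|u|}$ is nondecreasing on $(-\infty,0)$). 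Continuity across $0$ comes from (G1), which forces $g(x,0)=0$ and gives the sign information $\frac{g(x,u)}{u}\ge 0$ noted after (G5), so that $g(x,\cdot)$ does not jump down at the origin. Hence $g(x,\cdot)$ is nondecreasing on all of $\R$, $G(x,\cdot)$ is convex, and $\cI$ is convex.

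For the strict convexity of $\cI(u+\cdot)$ on $X^-$, fix $u\in X$ and take $v_0,v_1\in X^-$ with $v_0\neq v_1$ and $t\in(0,1)$. Writing $w_s := u + ((1-s)v_0 + s v_1)$, the difference $w_1 - w_0 = v_1 - v_0 \in X^-\setminus\{0\}$ has a nontrivial $X^-$-component, so the quadratic part $\frac12\|(\cdot)^-\|^2$ is already \emph{strictly} convex along this segment (the $X^-$-projection of $w_1-w_0$ is $v_1-v_0\neq 0$), while the integral term $\int G(x,\cdot)\,dx$ is convex by the previous paragraph. The sum of a strictly convex function and a convex function is strictly convex, which gives the claim. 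The main point to be careful about — and the step I would treat most explicitly — is the sign/monotonicity bookkeeping for $g(x,\cdot)$ near and across $u=0$, since (G4) is stated separately on $(-\infty,0)$ and $(0,\infty)$; the role of (G1) is precisely to glue these two monotone pieces into a globally nondecreasing function, and everything else (passing from pointwise to integral convexity, and combining with the strictly convex quadratic piece) is routine.
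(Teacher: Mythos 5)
Your proof is correct, but it takes a genuinely different route from the paper's. The paper does not argue via monotonicity of $g(x,\cdot)$ at all: it perturbs the nonlinearity, setting $G_\eps(x,u) = G(x,u) + \frac{\eps}{p}|u|^p$, so that $G_\eps$ becomes uniformly super-quadratic; this makes the inequality of \cite[Lemma 2.2]{SzulkinWeth} (cf. \cite[Remark 3.3(a)]{Mederski2016}) applicable, yielding
$$
g_\eps(x,u)\Big(\tfrac{t^2-1}{2}u+tv\Big)+G_\eps(x,u)-G_\eps(x,tu+v)\le 0 .
$$
Taking $t=1$ and letting $\eps\to 0^+$ gives the tangent-line inequality $G(x,u+v)\ge G(x,u)+g(x,u)v$, i.e.\ convexity of the $\cC^1$ function $G(x,\cdot)$; from there the paper concludes exactly as you do (integration preserves convexity, and the quadratic term $\frac12\|\cdot\|^2$ is strictly convex on $X^-$). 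Your argument instead extracts convexity directly from (G1) and (G4): the sign information $g(x,u)/u\ge 0$ (which indeed follows from (G1) plus the monotonicity in (G4), for a.e.\ $x\in\R^N$, not only outside $K$) lets you write $g(x,u)$ as a product of factors whose signs and monotonicity combine to make $g(x,\cdot)$ nondecreasing on each half-line and, since $g(x,0)=0$, on all of $\R$. This is more elementary and self-contained --- it avoids both the approximation step and the citation --- and it makes explicit precisely which hypotheses enter. What the paper's route buys is economy within the manuscript: the same perturbation-plus-limit device is needed anyway in Lemma \ref{ineq} to verify (I8) (where the full inequality with general $t\ge 0$ and $v\in X^-$ is required, not just convexity), so the paper reuses one mechanism twice rather than introducing a separate monotonicity argument. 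One small caveat on your write-up: the negative half-line case deserves the same explicit care as the positive one (there the factor $|u|$ is \emph{decreasing} and $g(x,u)/|u|$ is nonpositive nondecreasing, so the product is nondecreasing for a slightly different reason), but the bookkeeping does go through as you indicate.
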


\begin{proof}
For any $\eps > 0$ define $G_\varepsilon(u) = G(x,u) + \frac{\varepsilon}{p} |u|^p$. Then
$$
\frac{G_\eps(x,u)}{u^2} \to \infty \quad \mathrm{as} \ |u|\to\infty
$$
uniformly in $x\in\R^N$. In view of \cite[Lemma 2.2]{SzulkinWeth}, cf.
 \cite[Remark 3.3(a)]{Mederski2016} we show that
$$
g_\eps(x,u) \left( \frac{t^2-1}{2} u+tv \right)+G_\eps(x,u)-G_\eps(x,tu+v) \leq 0
$$
holds for any $u,v \in \R$, $t \geq 0$, $\eps > 0$ and a.e. $x \in \R^N$, where $g_\varepsilon (x,u) = g(x,u) + \varepsilon |u|^{p-2}u$. For $t = 1$ we get
$$
g_\eps(x,u)v +G_\eps(x,u)-G_\eps(x,u+v) \leq 0
$$
and passing to the limit as $\eps \to 0^+$
$$
g(x,u)v +G(x,u)-G(x,u+v) \leq 0
$$
or equivalently
$$
G(x,u+v) \geq G(x,u) + g(x,u)v.
$$
Thus $G(x, \cdot)$ is convex and therefore
$$
u \mapsto \int_{\R^N} G(x,u) \, dx 
$$
is convex. Since 
$$
u^- \mapsto \frac12 \|u^-\|^2 
$$
is strictly convex on $X^-$, we see that $\cI(u+\cdot):X^-\to\R$ is also strictly convex.
\end{proof}

Clearly, since $\cI$ is convex, (I2) is satisfied. Now we show (I4). Take any sequence $(u_n) \subset X$ and suppose that $\|u_n\| \to \infty$. If $\|u_n^+\| \to \infty$ we see that
$$
\|u_n^+\| + \cI(u_n) \geq \|u_n^+\| \to \infty.
$$
Otherwise, $(u_n^+)$ is bounded and $\|u_n^-\| \to \infty$. Hence
$$
\|u_n^+\| + \cI(u_n) \geq \cI(u_n) = \frac12 \|u_n^-\|^2 + \int_{\R^N} G(x,u_n) \, dx \geq \frac12 \|u_n^-\|^2 \to \infty.
$$
Now by Lemma \ref{lem:convex} and (I4), we easy check that $\cM$ is nonempty and (I5) is satisfied. Suppose that $u_n\cTto u$, i.e. $u_n^+\to u^+$ and $u_n^- \weakto u^-$. Observe that passing to a subsequence
$$
\liminf_{n\to\infty}\Big(\frac12 \|u_n^-\|^2 + \int_{\R^N} G(x,u_n) \, dx \Big)\geq \frac12 \|u^-\|^2 + \int_{\R^N} G(x,u) \, dx
$$
and if, in addition, $\cI(u_n) \to \cI(u)$, we obtain that
$$
\|u_n^-\|^2 \to \|u^-\|^2 \mbox{ and } \int_{\R^N} G(x,u_n) \, dx \to \int_{\R^N} G(x,u) \, dx.
$$
Thus $u_n^- \to u^-$ and (I3) holds. Note that \eqref{eq:gestimates} implies (I6). Hence we only need to check (I7) and (I8).

\begin{Lem}
(I7) holds.
\end{Lem}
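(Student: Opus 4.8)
The plan is to argue by contradiction, reducing (I7) to a blow-up of the nonlinear term on $K$ that is detected through the rigidity of $\cQ$ recorded in Lemma \ref{lem:Q}. Suppose that $t_n\to\infty$, $u_n\in\cQ$, $u_n\to u\neq 0$ in $X^+$, but that $\cI(m(t_nu_n))/t_n^2$ does \emph{not} tend to $+\infty$; passing to a subsequence we may assume $\cI(m(t_nu_n))\le Ct_n^2$ for some $C>0$. Writing $w_n:=m(t_nu_n)=t_nu_n+v_n$ with $v_n:=w_n^-\in X^-$ (here $w_n^+=t_nu_n$, since $t_nu_n\in\cQ\subset X^+$), we have
$$\cI(w_n)=\tfrac12\|v_n\|^2+\int_{\R^N}G(x,w_n)\,dx.$$
Because $G\ge 0$ (a consequence of (G1) and (G4)), both summands are nonnegative, so the bound on $\cI(w_n)$ gives $\|v_n\|/t_n\le\sqrt{2C}$ and $\int_K G(x,w_n)\,dx\le Ct_n^2$.

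Next I would pass to the rescaled functions $\phi_n:=w_n/t_n=u_n+\bar v_n$, where $\bar v_n:=v_n/t_n$ is bounded in $X^-$. Up to a subsequence $\bar v_n\weakto\bar v$ in $X^-$, and by Rellich's theorem together with a diagonal exhaustion of $\R^N$ by balls, $\bar v_n\to\bar v$ a.e.\ on $\R^N$; since $u_n\to u$ strongly we also have $u_n\to u$ a.e. Hence $\phi_n\to\phi:=u+\bar v$ a.e.\ on $\R^N$. The crucial point is to locate a subset of positive measure in $K$ on which $\phi\neq 0$: the function $\phi$ satisfies $\phi^+=u\in\cQ$ with $u\neq 0$, so $\phi\in X\setminus X^-$ and $\phi^+\in\cQ$, whence Lemma \ref{lem:Q} yields $|S|>0$ for $S:=\{x\in K:\phi(x)\neq 0\}$.

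Finally I would rewrite the nonlinear bound as
$$C\ge\frac{1}{t_n^2}\int_K G(x,t_n\phi_n)\,dx=\int_K\frac{G(x,t_n\phi_n)}{(t_n\phi_n)^2}\,\phi_n^2\,dx,$$
with the convention that the integrand vanishes where $\phi_n=0$. On $S$ we have $|t_n\phi_n|\to\infty$ a.e., so (G3) forces $G(x,t_n\phi_n)/(t_n\phi_n)^2\to\infty$ uniformly in $x\in K$, while $\phi_n^2\to\phi^2>0$ a.e.\ on $S$; Fatou's lemma then shows that the right-hand side tends to $+\infty$, contradicting the uniform bound $C$. This contradiction establishes (I7).

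I expect the main obstacle to be the middle step, namely guaranteeing that the a.e.\ limit $\phi$ is not annihilated on $K$. This is precisely where the construction of $\cQ$ as functions supported in $K$ and the rigidity expressed by Lemma \ref{lem:Q} are indispensable, and it is also why ordinary a.e.\ convergence must be upgraded to hold on the unbounded set $K$ (via the diagonal argument) rather than merely on bounded subsets; the superquadraticity in (G3) is only available on $K$, so the whole argument hinges on transferring the non-degeneracy of $u$ into non-degeneracy of $\phi$ there.
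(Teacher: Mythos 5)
Your proof is correct and follows essentially the same route as the paper's: rescale $m(t_nu_n)$ by $t_n$, reduce to the case where the $X^-$-part is bounded, extract a.e.\ limits, invoke Lemma \ref{lem:Q} to get a positive-measure subset of $K$ where the limit $u+\bar v$ is nonzero, and conclude via (G3) and Fatou's lemma. The only difference is cosmetic (you frame it as a contradiction with a uniform bound $C$, while the paper splits into the cases $\|v_n\|\to\infty$ and $(v_n)$ bounded), and your implicit use of the closedness of $\cQ$ to get $u\in\cQ$ is exactly the point the paper makes explicit.
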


\begin{proof}
Since $u_n \in \cQ$ we also have $t_n u_n \in \cQ$. Recall that for any $u\in X^+$ we find  $m(u)\in\cM$, which is the unique global maximizer of $\J|_{u+X^-}$ as in Section \ref{sec:criticaslpoitth}. Taking into account that $m(t_n u_n) = t_n u_n + w_n$ for some $w_n \in X^-$, in view of Lemma \ref{lem:Q}, $|\supp (m(t_n u_n)) \cap K | > 0$. Put $v_n:= w_n / t_n$. Note that if $\|v_n\| \to \infty$, then
\begin{align*}
\frac{\cI(m(t_n u_n))}{t_n^2} = \|v_n\|^2 + \int_{\R^N} \frac{G(x,m(t_n u_n))}{t_n^2} \, dx \geq \|v_n\|^2 \to \infty.
\end{align*}
Hence we may assume that $(v_n)$ is bounded, $v_n \weakto v$ and $v_n(x) \to v(x)$ for a.e. $x \in \R^N$. Since $u_n \to u \neq 0$ we may also assume that $u_n(x) \to u(x)$. $\cQ$ is closed, so that $u \in \cQ$ and, again in view of Lemma \ref{lem:Q} we have $|\supp (u+v) \cap K| > 0$. Then, for a.e. $x \in \supp (u+v) \cap K$
$$
|m(t_n u_n)(x)| = |t_n u_n(x) + w_n(x) | = t_n |u_n(x) + v_n(x)| \to \infty
$$
and
$$
\frac{|m(t_n u_n)(x)|^2}{t_n^2} = \left| u_n (x) + v_n(x) \right|^2 \to |u(x) + v(x)|^2 \neq 0.
$$
Moreover, from Fatou's lemma
\begin{align*}
\frac{\cI(m(t_n u_n))}{t_n^2} &= \|v_n\|^2 + \int_{\R^N} \frac{G(x,m(t_n u_n))}{t_n^2} \, dx \geq \int_{K} \frac{G(x,m(t_n u_n))}{t_n^2} \, dx \\
&\geq \int_{\supp (u+v) \cap K} \frac{G(x,m(t_n u_n))}{|m(t_n u_n))|^2} \frac{|m(t_n u_n))|^2}{t_n^2} \, dx \to \infty
\end{align*}
and we conclude.
\end{proof}

Observe that (I8) is a simple consequence of the following inequality.

\begin{Lem}\label{ineq}
For any $u \in X$, $v \in X^-$ and $t \geq 0$ there holds
$$
\cJ(u) \geq \cJ(tu + v) - \frac{t^2-1}{2} \cJ'(u)(u) - t\cJ'(u)(v).
$$
\end{Lem}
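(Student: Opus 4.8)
The plan is to reduce the claimed functional inequality to the \emph{scalar} pointwise inequality already produced in the proof of Lemma \ref{lem:convex}, with all the genuinely variational content carried by that pointwise bound and the remaining work being a clean algebraic cancellation of the quadratic terms. First I would rewrite the target as the nonnegativity of
\[
\cJ(u) - \cJ(tu+v) + \frac{t^2-1}{2}\cJ'(u)(u) + t\,\cJ'(u)(v),
\]
and expand each piece using $\cJ(u)=\tfrac12\|u^+\|^2-\tfrac12\|u^-\|^2-\int_{\R^N} G(x,u)\,dx$ together with the decomposition $(tu+v)^+=tu^+$ and $(tu+v)^-=tu^-+v$, which is valid because $v\in X^-$. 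Writing the differential as $\cJ'(u)(w)=\langle u^+,w^+\rangle-\langle u^-,w^-\rangle-\int_{\R^N} g(x,u)w\,dx$, one gets $\cJ'(u)(u)=\|u^+\|^2-\|u^-\|^2-\int g(x,u)u\,dx$ and, since $v^+=0$, $\cJ'(u)(v)=-\langle u^-,v\rangle-\int g(x,u)v\,dx$.

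The key bookkeeping step is the cancellation of the quadratic part. Collecting the $\|u^+\|^2$ coefficients gives $\tfrac12-\tfrac{t^2}{2}+\tfrac{t^2-1}{2}=0$; expanding $\|tu^-+v\|^2=t^2\|u^-\|^2+2t\langle u^-,v\rangle+\|v\|^2$ one checks that the $\|u^-\|^2$ contributions likewise cancel, the cross terms $\pm t\langle u^-,v\rangle$ cancel exactly against the inner-product part of $t\,\cJ'(u)(v)$, and a single term $+\tfrac12\|v\|^2$ survives. After these cancellations I would arrive at the identity
\[
\cJ(u) - \cJ(tu+v) + \frac{t^2-1}{2}\cJ'(u)(u) + t\,\cJ'(u)(v)
= \frac12\|v\|^2 + \int_{\R^N}\Big[G(x,tu+v)-G(x,u)-\frac{t^2-1}{2}g(x,u)u-t\,g(x,u)v\Big]\,dx.
\]

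It then remains to show the right-hand side is nonnegative. Since $v\in X^-$, the squared norm $\|v\|^2\ge 0$, so it suffices to control the integral. For this I would invoke the estimate established in the proof of Lemma \ref{lem:convex}: passing to the limit $\eps\to 0^+$ in $g_\eps(x,u)\big(\tfrac{t^2-1}{2}u+tv\big)+G_\eps(x,u)-G_\eps(x,tu+v)\le 0$ for the general parameter $t\ge 0$ (not merely $t=1$, as used there for convexity) yields, for all $u,v\in\R$, every $t\ge 0$ and a.e.\ $x$,
\[
G(x,tu+v)-G(x,u)-\frac{t^2-1}{2}g(x,u)u-t\,g(x,u)v \ge 0.
\]
Applying this pointwise with the real numbers $u(x)$ and $v(x)$ for a.e.\ $x\in\R^N$ and integrating shows the integral above is nonnegative, and combined with $\tfrac12\|v\|^2\ge 0$ this gives the desired inequality.

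The only delicate point is the quadratic bookkeeping in the second step: one must verify that the cross term coming from $\|tu^-+v\|^2$ is cancelled \emph{exactly} by the inner-product part of $t\,\cJ'(u)(v)$ and that the surviving $\tfrac12\|v\|^2$ enters with the correct (nonnegative) sign, since the whole argument hinges on discarding that term. Everything analytic has already been done in Lemma \ref{lem:convex}, so beyond this sign-tracking the proof is routine.
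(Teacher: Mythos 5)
Your proof is correct and takes essentially the same route as the paper: both arguments rest on the perturbation $G_\varepsilon(x,u)=G(x,u)+\frac{\varepsilon}{p}|u|^p$, which restores uniform superquadraticity so that the Szulkin--Weth/Mederski inequality applies, followed by passing to the limit $\varepsilon\to 0^+$. The only difference is one of presentation: the paper applies the inequality at the functional level to $\cJ_\varepsilon$ (citing \cite[Lemma 3.2]{Mederski2016}) and takes the limit there, whereas you take the limit in the pointwise inequality and carry out by hand the quadratic cancellation that the citation subsumes; your resulting identity, with the discarded term $\tfrac12\|v\|^2\ge 0$ surviving, is exactly right.
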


\begin{proof}
Define
$$
\cJ_\varepsilon (u) := \cJ(u) - \frac{\varepsilon}{p} |u|_p^p
$$
for any $\varepsilon > 0$. Here and below $|\cdot|_k$ stands for the usual $L^k$-norm, $k\geq 1$ or $k=\infty$.  Then for every $\varepsilon > 0$
$$
\frac{G_\varepsilon (x,u)}{u^2} \to \infty \quad \mbox{as } |u|\to\infty \ \mbox{uniformly in } x \in \R^N,
$$
where
$$
G_\varepsilon (x,u) = G(x,u) + \frac{\varepsilon}{p} |u|_p^p
$$
and as in  \cite[Lemma 3.2]{Mederski2016} (cf.  \cite[Lemma 2.2]{SzulkinWeth}) we check that for any $u \in X$, $v \in X^-$ and $t \geq 0$ there holds
$$
\cJ_\varepsilon (u) \geq \cJ_\varepsilon(tu + v) - \frac{t^2-1}{2} \cJ_\varepsilon'(u)(u) - t\cJ_\varepsilon '(u)(v).
$$
Equivalently, we obtain
$$
\cJ (u) - \frac{\varepsilon}{p} |u|_p^p \geq \cJ(tu+v) - \frac{\varepsilon}{p} |tu+v|_p^p - \frac{t^2-1}{2} \left( \cJ'(u)(u) - \varepsilon |u|^p_p \right) - t \left( \cJ'(u)(v) - \varepsilon \int_{\R^N} |u|^{p-2}uv \, dx \right).
$$
Taking $\varepsilon \to 0^+$ we obtain
$$
\cJ(u) \geq \cJ(tu + v) - \frac{t^2-1}{2} \cJ'(u)(u) - t\cJ'(u)(v).
$$
\end{proof}

\section{Boundedness of Cerami-type sequences}\label{sec:Cerami}

\begin{Lem}\label{lem:CerBounded}
Let $\beta\geq 0$. Any sequence $(u_n) \subset X$ such that
$$
0\leq \cJ(u_n) \leq \beta,\quad
(1+\|u_n^+\|)\J'(u_n)\to 0\hbox{ and }\J'(u_n)(u_n^-)\to 0\hbox{ as }n\to\infty,
$$
is bounded in $X$. In particular, any Cerami sequence for $\cJ$ at a positive level is bounded.
\end{Lem}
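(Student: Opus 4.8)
The plan is to argue by contradiction, after first disposing of the \emph{in particular} clause: if $(u_n)$ is a Cerami sequence at a level $c>0$, then $\cJ(u_n)\to c$ forces $0\le\cJ(u_n)\le\beta$ for any fixed $\beta>c$ and $n$ large, while $(1+\|u_n\|)\cJ'(u_n)\to0$ gives both $(1+\|u_n^+\|)\cJ'(u_n)\to0$ and $|\cJ'(u_n)(u_n^-)|\le\|\cJ'(u_n)\|\,\|u_n\|\to0$, so the three displayed hypotheses hold. I then extract the two identities that drive the argument. Since $(1+\|u_n^+\|)\cJ'(u_n)\to0$ yields $\cJ'(u_n)(u_n^+)\to0$, combining with $\cJ'(u_n)(u_n^-)\to0$ and the orthogonality of $X^+,X^-$ for the quadratic form gives
\[
\|u_n\|^2=\int_{\R^N}g(x,u_n)(u_n^+-u_n^-)\,dx+o(1),
\]
while $\cJ(u_n)\ge0$ together with $G\ge0$ (a consequence of (G1) and (G4)) gives $\|u_n^+\|\ge\|u_n^-\|$. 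Assuming $\|u_n\|\to\infty$ along a subsequence, I set $v_n:=u_n/\|u_n\|$, so $\|v_n\|=1$ and $\|v_n^+\|^2\ge\tfrac12$. Dividing the identity by $\|u_n\|^2$ produces $1=\int_{\R^N}\frac{g(x,u_n)}{u_n}v_n(v_n^+-v_n^-)\,dx+o(1)$, and $\cJ(u_n)\ge0$ gives the energy bound $\int_{\R^N}\frac{G(x,u_n)}{\|u_n\|^2}\,dx\le\tfrac12(\|v_n^+\|^2-\|v_n^-\|^2)\le\tfrac12$.

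Because $V$, $g$, $K$ and $\Theta$ are $\Z^N$-periodic, integer translations are isometries of $X$ preserving all hypotheses, so I may apply Lions' vanishing lemma to $(v_n)$. If vanishing fails, then after translating each $u_n$ by a suitable $k_n\in\Z^N$ (and relabelling) I obtain $v_n\weakto v\ne0$ in $X$ with $v_n\to v$ a.e. I claim $v=0$, which is the desired contradiction. On $K$, Fatou's lemma applied to $\int_K\frac{G(x,u_n)}{u_n^2}v_n^2\,dx\le\tfrac12$ together with (G3) (so that $G(x,u_n)/u_n^2\to\infty$ wherever $|u_n|\to\infty$, i.e.\ wherever $v\ne0$) forces $|\{v\ne0\}\cap K|=0$, hence $v=0$ a.e.\ on $K$. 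On the open set $\R^N\setminus K$ I divide $\cJ'(u_n)=o(1)$ by $\|u_n\|$ and test against $\phi\in C_c^\infty(\R^N\setminus K)$; using $0\le\frac{g(x,u_n)}{u_n}\le\Theta$ and (G5) (so $\frac{g(x,u_n)}{u_n}\to\Theta$ a.e.\ on $\{v\ne0\}$, while $v_n\to0$ elsewhere) I pass to the limit and obtain $\int_{\R^N\setminus K}\big(\nabla v\cdot\nabla\phi+(V-\Theta)v\phi\big)\,dx=0$ for all such $\phi$. Since $v=0$ a.e.\ on $K$, this says $v$ is a Dirichlet eigenfunction of $-\Delta+V-\Theta$ on $\R^N\setminus K$ at eigenvalue $0$, which (G6) forbids; thus $v=0$ a.e.\ on $\R^N\setminus K$ as well, and $v\equiv0$, contradicting $v\ne0$.

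It remains to exclude vanishing, i.e.\ $v_n\to0$ in $L^q(\R^N)$ for every $q\in(2,2^*)$ with $\|v_n\|=1$; this is where the proof is genuinely delicate, since the superquadratic mechanism is unavailable on $\R^N\setminus K$. I split $1=\int\frac{g(x,u_n)}{u_n}v_n(v_n^+-v_n^-)\,dx+o(1)$ into integrals over $K$ and over $\R^N\setminus K$. The contribution over $K$ I control by \eqref{eq:gestimates} combined with the $L^q$-smallness of $v_n$ and the energy bound $\int_K\frac{G(x,u_n)}{\|u_n\|^2}\le\tfrac12$, showing it tends to $0$. Over $\R^N\setminus K$ I exploit the asymptotically linear structure, writing $\frac{g(x,u_n)}{u_n}=\Theta-r_n$ with $0\le r_n\le\Theta$ supported on $\{|u_n|<a\}$, which reduces this part to the quadratic form $\mf{q}(w):=\int_{\R^N}|\nabla w|^2+Vw^2\,dx-\int_{\R^N\setminus K}\Theta w^2\,dx$ of the operator in (G6) evaluated on $v_n^\pm$, plus a lower-order remainder. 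Interpreting (G6) as the non-degeneracy of this operator and using $v_n\weakto0$ to annihilate the compact remainders, I expect to force $\|v_n^+\|^2-\|v_n^-\|^2\to0$ and, ultimately, $\|v_n^\pm\|\to0$, contradicting $\|v_n\|=1$. Having excluded both cases, $(u_n)$ is bounded.

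The main obstacle is precisely the region $\R^N\setminus K$, where $G$ is only asymptotically quadratic: neither \eqref{AR2}/the Ambrosetti--Rabinowitz trick nor the Fatou argument that settles the $K$-part applies, and the loss of compactness on all of $\R^N$ blocks a naive passage to the limit. Both difficulties must be routed through the linear operator $-\Delta+V-\Theta$ on $\R^N\setminus K$ and the non-resonance condition (G6): in the non-vanishing case (G6) eliminates the would-be nontrivial eigenfunction, and in the vanishing case it must supply the spectral non-degeneracy converting $\|v_n\|=1$ into a contradiction. Turning the a.e.\ convergence $g(x,u_n)/u_n\to\Theta$ into a rigorous operator-level statement while only $v_n\weakto0$ is available is the technical heart of the lemma.
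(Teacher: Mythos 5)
Your non-vanishing branch is essentially the paper's own argument (Fatou plus (G3) to kill $v$ on $K$, then testing $\cJ'(u_n)/\|u_n\|$ against $\varphi\in\cC_0^\infty(\R^N\setminus K)$ and invoking (G5)--(G6)), and that part is sound. The genuine gap is the vanishing branch, exactly where you write ``I expect to force''. Two concrete failures occur there. First, on $K$: estimating $\int_K\frac{g(x,u_n)}{\|u_n\|}\,(v_n^+-v_n^-)\,dx$ via \eqref{eq:gestimates} produces the term $c_\eps\,\|u_n\|^{p-2}\,|v_n|_p^{p-1}\,|v_n^+-v_n^-|_p$, an indeterminate product of $\|u_n\|^{p-2}\to\infty$ with $|v_n|_p^{p-1}\to0$; and the energy bound $\int_K G(x,u_n)/\|u_n\|^2\le\tfrac12$ cannot be converted into a bound on the corresponding $g$-integral, since (G1)--(G5) provide no reverse Ambrosetti--Rabinowitz inequality (\eqref{AR2} goes the wrong way, and no $\Delta_2$-type control of $G(x,2u)$ by $G(x,u)$ is assumed). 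Second, and more fundamentally, on $\R^N\setminus K$: condition (G6) only says that $0$ is not an \emph{eigenvalue} of $-\Delta+V-\Theta$; it does not say $0\notin\sigma(-\Delta+V-\Theta)$. In the paper's own motivating example (periodically perforated media, Kuchment--Taskinen) the spectrum is absolutely continuous, so (G6) holds precisely because there are no eigenvalues, while $0$ may perfectly well lie in the continuous spectrum. Consequently there is no coercivity or invertibility estimate available, and ``$v_n\weakto 0$ plus smallness of the quadratic form along $(v_n)$'' cannot be upgraded to $\|v_n^\pm\|\to0$. Eigenvalue exclusion is usable only when one holds an actual candidate eigenfunction, i.e.\ only in the non-vanishing branch; your plan for the vanishing branch cannot be completed under (G6) as stated.

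The paper avoids all of this by running Lions' dichotomy on $(v_n^+)$ rather than on $(v_n)$, and by treating the vanishing case with Lemma \ref{ineq} (the inequality encoding (G4)/(I8)): taking $t=s/\|u_n\|$ and $v=-t\,u_n^-$ there yields
\begin{equation*}
\beta\ \ge\ \cJ(u_n)\ \ge\ \cJ(s v_n^+)+o(1)\ =\ \frac{s^2}{2}\|v_n^+\|^2-\int_{\R^N}G(x,s v_n^+)\,dx+o(1),
\end{equation*}
where the $o(1)$ uses exactly your two hypotheses $\cJ'(u_n)(u_n^+)\to0$ and $\cJ'(u_n)(u_n^-)\to0$. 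If $(v_n^+)$ vanishes, Lions' lemma gives $v_n^+\to0$ in $L^t(\R^N)$ for $2<t<2^*$, so $\int G(x,s v_n^+)\,dx\to0$ by (G1)--(G2), and your own observation $\|v_n^+\|^2\ge\tfrac12$ gives $\beta\ge s^2/4+o(1)$ for every $s>0$, a contradiction; no estimate of $\int g(x,u_n)u_n^\pm/\|u_n\|^2$ is ever needed. Note that this repair requires vanishing of $v_n^+$ specifically, so the dichotomy must be set up for $(v_n^+)$ from the start; your non-vanishing branch then goes through unchanged, since non-vanishing of $(v_n^+)$ still produces, after integer translations, a weak limit $v$ with $v^+\ne0$, hence $v\ne0$.
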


\begin{proof}
Assume by contradiction that $\|u_n\|\to\infty$. Put $v_n := u_n / \|u_n\|$. Since $\|v_n\| = 1$, we may assume that $v_n \weakto v$ and $v_n(x) \to v(x)$ for a.e. $x \in \R^N$, passing to a subsequence if necessary.  Moreover we can assume that there is $(z_n) \subset \Z^N$ such that
$$
\liminf_{n\to\infty} \int_{B(z_n, 1+\sqrt{N})} |v_n^+|^2 \, dx > 0.
$$
Otherwise, in view of Lions' lemma \cite[Lemma 1.21]{Willem}
$$
v_n^+ \to 0 \quad \mbox{in} \ L^t(\R^N)
$$
for all $2 < t < 2^*$. Fix any $s > 0$ and $\varepsilon >0$, in view of (G1) and (G2) there is $C_\varepsilon > 0$ such that
\begin{align*}
\limsup_{n \to \infty} \left| \int_{\R^N} G(x, s v_n^+) \, dx \right| &\leq \limsup_{n\to\infty} \left( \varepsilon  |sv_n^+|_2^2 + C_\varepsilon  |sv_n^+|_p^p \right) \\ 
&\leq \varepsilon \limsup_{n\to\infty} |sv_n^+|_2^2.
\end{align*}
Taking $\varepsilon \to 0^+$ we get 
$$
\int_{\R^N} G(x, s v_n^+) \, dx \to 0
$$
for any $s > 0$. Since $\J'(u_n)(u_n)\to 0$ and $\J'(u_n)(u_n^-)\to 0$ and taking into account Lemma \ref{ineq}, we infer that
\begin{eqnarray*}
\cJ(u_n) &\geq& \cJ(s v_n^+) - \frac{(s/\|u_n\|)^2-1}{2} \cJ'(u_n)(u_n) + (s/\|u_n\|)^2\cJ'(u_n)(u_n^-)\\
&=& \cJ(s v_n^+) + o(1)= \frac{s^2}{2} \|v_n^+\|^2 + o(1).	
\end{eqnarray*}
Note that 
$$\|v_n^+\|^2- \|v_n^-\|^2\geq 2\cJ(u_n)\geq 0,$$
hence 
$$\cJ(u_n) \geq \frac{s^2}{2} \|v_n^+\|^2 + o(1)\geq   \frac{s^2}{4} (\|v_n^+\|^2+\|v_n^-\|^2) + o(1)=\frac{s^2}{4} + o(1)$$
for any $s\geq 0$,
and we get a contradiction, since $(\cJ(u_n))$ is bounded. Thus there is $(z_n) \subset \Z^N$ such that
$$
\liminf_{n\to\infty} \int_{B(z_n, 1+\sqrt{N})} |v_n^+|^2 \, dx > 0.
$$
Passing to a subsequence we have $v_n (\cdot + z_n) \weakto v \neq 0$ and $v_n (x + z_n) \to v(x)$ for a.e. $x \in \R^N$. Suppose that
$$
|S| > 0, \mbox{ where } S = \supp v \cap K.
$$
Note that for a.e. $x \in S$ we have $|u_n(x+z_n)| = |v_n(x+z_n)| \| u_n \| \to \infty$ and $x+z_n\in K$ for all $n$. Thus by (G3)
\begin{align*}
o(1) &= \frac{\cJ(u_n)}{\|u_n\|^2} = \frac12 \| v_n^+\|^2 - \frac12 \| v_n^-\|^2 - \int_{\R^N} \frac{G(x+z_n,u_n(x+z_n))}{\|u_n\|^2} \, dx \\
&\leq \frac12 \| v_n^+\|^2 - \frac12 \| v_n^-\|^2 - \int_{S} \frac{G(x+z_n,u_n(x+z_n))}{\|u_n\|^2} \, dx \\
&\leq \frac12 - \int_{S} \frac{G(x+z_n,u_n(x+z_n))}{\|u_n\|^2} \, dx \\
&=
\frac12 - \int_{S} \frac{G(x+z_n,u_n(x+z_n))}{u_n(x+z_n)^2} v_n(x+z_n)^2 \, dx 
 \to -\infty
\end{align*}
we get a contradiction. Hence $|S|=0$. If $|\R^N \setminus K| = 0$ (e.g. $K = \R^N$), the proof is completed. Otherwise $\supp v \subset \R^N \setminus K$. Thus, by the $\Z^N$-periodicity of $K$, for all $\varphi \in \cC_0^\infty (\R^N)$ such that $|\supp \varphi \cap \supp v | > 0$ there holds
$$
\left| \supp \varphi \cap \supp v \cap (\R^N \setminus K) \right| = | \supp \varphi \cap \supp v | > 0.
$$
Fix $\varphi \in \cC_0^\infty (\R^N \setminus K)$ and let $\varphi_n := \varphi( \cdot - z_n)$. Then
$$
o(1) = \cJ'(u_n)(\varphi_n) = \langle u_n^+, \varphi_n^+ \rangle - \langle u_n^-, \varphi_n^- \rangle  - \int_{\R^N} g(x,u_n)\varphi_n \, dx.
$$
Note that for sufficiently large $n$
\begin{align*}
\int_{\R^N} g(x,u_n)\varphi_n \, dx &= \int_{\R^N} g(x,u_n(x+z_n))\varphi_n(x+z_n) \, dx\\
&= \|u_n\| \int_{\R^N} \frac{g(x+z_n,u_n(x+z_n))}{u_n(x+z_n)} v_n(x+z_n) \varphi \, dx \\
&= \|u_n\| \int_{\supp \varphi} \frac{g(x+z_n,u_n(x+z_n))}{u_n(x+z_n)} v_n(x+z_n) \varphi \, dx \\
&= \|u_n\| \left( \int_{\supp \varphi \cap \supp v} \frac{g(x+z_n,u_n(x+z_n))}{u_n(x+z_n)} v_n(x+z_n) \varphi \, dx + o(1) \right).
\end{align*}
Recall that for a.e. $x \in \supp \varphi \cap \supp v$ we have $|u_n (x+z_n)| \to \infty$ and $|u_n(x+z_n)| \geq a$ for sufficiently large $n$. Since $x+z_n \not\in K$, we have that
$$
\frac{g(x+z_n,u_n(x+z_n))}{u_n(x+z_n)} v_n(x+z_n) \varphi(x)  = \Theta (x+z_n) v_n(x+z_n) \varphi (x) =  \Theta (x) v_n(x+z_n) \varphi (x)
$$
and
$$
\frac{g(x+z_n,u_n(x+z_n))}{u_n(x+z_n)} v_n(x+z_n) \varphi (x)  \to \Theta (x) v (x) \varphi(x) \quad \mbox{for a.e. } x\in \supp \varphi \cap \supp v.
$$
Again, passing to a subsequence we have $v_n(\cdot + z_n) \to v$ in $L^2 (\supp \varphi \cap \supp v)$. Moreover by (G4) and (G5)
$$
\left| \frac{g(x+z_n,u_n(x+z_n))}{u_n(x+z_n)} \right|^2 \leq |\Theta(x+z_n) |^2 \leq |\Theta|_\infty^2,
$$ 
hence
$$
\frac{g(\cdot+z_n,u_n(\cdot+z_n))}{u_n(\cdot+z_n)} \to \Theta \quad \mbox{in} \ L^2 (\supp \varphi \cap \supp v).
$$
In view of the H\"older inequality
$$
\int_{\supp \varphi \cap \supp v} \frac{g(x+z_n,u_n(x+z_n))}{u_n(x+z_n)} v_n(x+z_n) \varphi \, dx \to \int_{\R^N} \Theta(x) v \varphi \, dx.
$$
Thus
\begin{align*}
&\quad \int_{\R^N} \nabla v \nabla \varphi + V(x) v \varphi \, dx = 
\int_{\R^N} \nabla v_n \nabla \varphi_n + V(x) v_n \varphi_n \, dx + o(1) \\ &= \frac{1}{\|u_n\|} \int_{\R^N} g(x,u_n) \varphi_n \, dx + o(1) =  \int_{\R^N} \Theta(x)  v \varphi \, dx + o(1).
\end{align*}
Finally
$$
\int_{\R^N} \nabla v \nabla \varphi + V(x) v \varphi \, dx =  \int_{\R^N} \Theta(x) v \varphi \, dx \quad \mbox{for } \varphi \in \cC_0^\infty (\R^N \setminus K),
$$
and $0$ is an eigenvalue of the operator $-\Delta + V(x) - \Theta(x)$ on $L^2 (\R^N \setminus K)$ with Dirichlet boundary conditions, which is a contradiction with (G6).
\end{proof}

\begin{Prop}\label{Prop_bounded}
Let $\beta>0$. There exists $M_\beta>0$ such that for every $(u_n)\subset X$ satisfying
$$0\leq \liminf_{n\to\infty}\J(u_n)\leq \limsup_{n\to\infty}\J(u_n)\leq \beta$$ and
$$(1+\|u_n^+\|)\J'(u_n)\to 0\hbox{ and }\J'(u_n)(u_n^-)\to 0,$$
there holds $\limsup_{n\to\infty}\|u_n\|\leq M_\beta$.
\end{Prop}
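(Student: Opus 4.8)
The plan is to derive the \emph{uniform} bound $M_\beta$ from the \emph{non-uniform} boundedness already established in Lemma \ref{lem:CerBounded}, via a contradiction-and-diagonalization argument. Suppose the claim fails. Then no $M_\beta$ works, so for every $k\in\N$ there is a sequence $(u_n^k)_n\subset X$ satisfying the three hypotheses of the Proposition for the given $\beta$, yet with $\limsup_{n\to\infty}\|u_n^k\|>k$. The idea is to select a single term from each of these sequences so as to assemble one sequence $(w_k)_k$ that is unbounded but still enjoys the Cerami-type conditions, thereby contradicting the boundedness guaranteed by Lemma \ref{lem:CerBounded}.

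To perform the selection, fix $k$. Since $\liminf_n\J(u_n^k)\ge 0$, $\limsup_n\J(u_n^k)\le\beta$, $(1+\|(u_n^k)^+\|)\J'(u_n^k)\to 0$ and $\J'(u_n^k)((u_n^k)^-)\to 0$ as $n\to\infty$, each of these holds up to an error $1/k$ for all sufficiently large $n$; moreover $\limsup_n\|u_n^k\|>k$ means infinitely many terms have norm exceeding $k$. Hence I may choose an index $n_k$ with
$$
\|u_{n_k}^k\|>k,\quad -\tfrac1k<\J(u_{n_k}^k)<\beta+\tfrac1k,\quad (1+\|(u_{n_k}^k)^+\|)\,\|\J'(u_{n_k}^k)\|<\tfrac1k,\quad |\J'(u_{n_k}^k)((u_{n_k}^k)^-)|<\tfrac1k.
$$
Setting $w_k:=u_{n_k}^k$, the sequence $(w_k)$ satisfies $\|w_k\|\to\infty$, $(1+\|w_k^+\|)\J'(w_k)\to 0$ and $\J'(w_k)(w_k^-)\to 0$, while $-1/k<\J(w_k)<\beta+1/k$ forces $0\le\liminf_k\J(w_k)$ and $\limsup_k\J(w_k)\le\beta$.

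Finally I would invoke Lemma \ref{lem:CerBounded} to obtain a contradiction, since $(w_k)$ is a Cerami-type sequence whose energies are asymptotically trapped in $[0,\beta]$ and must therefore be bounded, contradicting $\|w_k\|\to\infty$. The one point requiring care — and the only genuine obstacle — is that Lemma \ref{lem:CerBounded} is stated with the \emph{pointwise} requirement $0\le\J(u_n)\le\beta$, whereas $(w_k)$ only satisfies $\J(w_k)\in(-1/k,\beta+1/k)$. I would resolve this by observing that the proof of Lemma \ref{lem:CerBounded} uses the energy bound only asymptotically: the upper bound enters merely through boundedness of $(\J(w_k))$, while the lower bound $\J\ge 0$ is used solely through the inequality $\|v_n^+\|^2-\|v_n^-\|^2\ge 2\J(u_n)$ in the vanishing (Lions) alternative. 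Writing $v_k:=w_k/\|w_k\|$, the weaker bound $\J(w_k)\ge -1/k$ still yields $\|v_k^+\|^2\ge \tfrac12-\tfrac1k$, hence the same estimate $\J(w_k)\ge \tfrac{s^2}{4}+o(1)$ for every $s\ge 0$, contradicting boundedness of $(\J(w_k))$; the remaining (non-vanishing) alternative is unaffected by the sign of $\J$. Thus the conclusion of Lemma \ref{lem:CerBounded} applies verbatim to $(w_k)$, and the Proposition follows.
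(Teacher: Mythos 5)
Your proposal is correct and is essentially the paper's own proof: the paper likewise argues by contradiction, extracting a diagonal sequence $u_{n(k)}^k$ with $\|u_{n(k)}^k\|\geq k-1$ and invoking Lemma \ref{lem:CerBounded} to get a contradiction. In fact you are more careful than the paper on the one delicate point — Lemma \ref{lem:CerBounded} assumes the \emph{pointwise} bounds $0\leq\J(u_n)\leq\beta$, which the diagonal sequence only satisfies up to $1/k$ errors — and your observation that its proof uses the lower bound only through $\|v_n^+\|^2-\|v_n^-\|^2\geq 2\J(u_n)/\|u_n\|^2$ in the vanishing alternative correctly justifies the step the paper leaves implicit.
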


\begin{proof}
Suppose by contradiction that there is $\beta$ such that for any $k \geq 1$ there is sequence $(u_n^k) \subset X$ satisfying
$$0\leq \liminf_{n\to\infty}\J(u_n^k)\leq \limsup_{n\to\infty}\J(u_n^k)\leq \beta$$
and 
$$(1+\|(u_n^k)^+\|)\J'(u_n^k)\to 0,\hbox{ and }\J'(u_n^k)((u_n^k)^-)\to 0,$$
but $\limsup_{n \to \infty} \| u_n^k \| \geq k$. Choose $n(k)$ such that $\| u_{n(k)}^k\| \geq k - 1$. We may assume that $n(k)$ increases when $k$ increases. Then $(u_{n(k)}^k)$ satisfies all assumptions of Lemma \ref{lem:CerBounded}, but is unbounded -- a contradiction.
\end{proof}

\section{Proof of Theorem \ref{Th:Existence} and Theorem \ref{ThMain2}}\label{sect:existence}

From Theorem \ref{ThLink1} we see that there is a Cerami sequence $(u_n) \subset X^+$ for $\tJ$ at the level $c_\cM > 0$ given by \eqref{cm}. Let $v_n := m(u_n) = u_n + w_n \in \cM$, where $w_n \in X^-$. Then $(\cJ(v_n))$ is bounded. Moreover by property (iv) in Section \ref{sec:criticaslpoitth} we obtain
$$
(1+\| v_n^+ \|) \cJ' (v_n) = (1+\|u_n\|) \tJ ' (u_n) \to 0
 $$
and
$$
\cJ'(v_n)(v_n^-) = 0.
$$
Hence, in view of Lemma \ref{lem:CerBounded}, $(v_n) \subset \cM$ is bounded and therefore $(v_n^+) \subset X^+$ is bounded as well. Then $(v_n) \subset \cM$ is a bounded Palais-Smale sequence for $\cJ$.

\begin{altproof}{Theorem \ref{Th:Existence}}
Up to a subsequence we have
\begin{align*}
v_n \weakto v &\mbox{ for some } v \in X, \\
v_n \to v &\mbox{ in } L^t_{\loc} (\R^N) \mbox{ for all } 2 \leq t < 2^*, \\
v_n \to v &\mbox{ a.e. on } \R^N.
\end{align*}
Suppose that 
$$
\sup_{y \in \R^N} \int_{B(y,1+\sqrt{N})} |v_n^+|^2 \, dx \to 0.
$$
From Lions' lemma $v_n^+ \to 0$ in $L^p (\R^N)$. Then
$$
\cJ'(v_n)(v_n^+) = \frac12 \|v_n^+\|^2 - \int_{\R^N} g(x,v_n)v_n^+ \, dx.
$$
Note that by \eqref{eq:gestimates}
\begin{align*}
\int_{\R^N} |g(x,v_n)v_n^+| \, dx &\leq \varepsilon \int_{\R^N} |v_n| |v_n^+| \, dx + c_\varepsilon \int_{\R^N} |v_n|^{p-1} |v_n^+| \, dx \\ &\leq \varepsilon |v_n|_2 |v_n^+|_2 + c_\varepsilon |v_n|_p^{p-1} |v_n^+|_p \leq \varepsilon M + o(1)
\end{align*}
for some $M > 0$ and therefore $\int_{\R^N} g(x,v_n)v_n^+ \, dx \to 0$. On the other hand
$$
| \cJ'(v_n) (v_n^+) | \leq \| \cJ'(v_n) \| \| v_n^+\|  \to 0.
$$
Hence $v_n^+ \to 0$ in $X$ and
$$
0 < c_{\cM} = \lim_{n\to\infty} \cJ(v_n) = \lim_{n\to\infty} \left( - \frac{1}{2} \|v_n^-\|^2 - \int_{\R^N} G(x,v_n) \, dx \right) \leq 0,
$$
which is a contradiction. Hence there is a sequence $(z_n) \subset \mathbb{Z}^N$ such that
$$
\liminf_{n\to\infty} \int_{B(z_n, 1+\sqrt{N})} |v_n^+|^2 \, dx > 0.
$$
Define $w_n := v_n(\cdot - z_n)$. Then
$$
\liminf_{n\to\infty} \int_{B(0, 1+\sqrt{N})} |w_n^+|^2 \, dx  > 0
$$
and $\|w_n\| = \|v_n\|$, so $(w_n)$ is bounded in $X$ and, up to a subsequence
\begin{align*}
w_n &\weakto w \quad \mathrm{in} \ X, \\
w_n &\to w \quad \mathrm{in} \ L^2_{\loc} (\R^N) \mbox{ and in } L^p_{\loc} (\R^N), \\
w_n(x) &\to w(x) \quad \mbox{for a.e. } x \in \R^N,
\end{align*}
and $w^+ \neq 0$, in particular $w \neq 0$. To show that $w$ is a critical point of $\cJ$ take any test function $\varphi \in \cC_0^\infty (\R^N)$ and see that
\begin{align*}
|\cJ'(w_n)(\varphi)| = | \cJ'(v_n)(\varphi(\cdot + z_n) | \leq \| \cJ'(v_n) \| \|\varphi \| \to 0.
\end{align*}
On the other hand
\begin{align*}
\left| \cJ'(w_n)(\varphi) - \cJ'(w)(\varphi) \right| &\leq  \left| \int_{\R^N} \nabla (w_n - w) \nabla \varphi + V(x) (w_n-w) \varphi \, dx \right|\\ &\quad + \left| \int_{\R^N} (g(x,w_n) - g(x,w)) \varphi   \, dx \right|.
\end{align*}
Note that for every measurable set $E \subset \supp \varphi$
\begin{align*}
\int_{E} |g(x,w_n) \varphi |  \, dx &\leq \varepsilon \int_E |w_n \varphi| \, dx + C_\varepsilon \int_E |w_n|^{p-1} |\varphi| \, dx \\
&\leq \varepsilon |w_n|_2 |\varphi \chi_E|_2 + C_\varepsilon |w_n|_p^{p-1} |\varphi \chi_E|_p.
\end{align*}
Hence $\{ g(x,w_n)\varphi \}$ is uniformly integrable on $\supp \varphi$ and therefore
$$
\left| \int_{\R^N} (g(x,w_n) - g(x,w)) \varphi   \, dx \right| \to 0.
$$
In view of the weak convergence $w_n \weakto w$ we obtain
$$
\langle w_n, \varphi^+ \rangle = \langle w_n^+, \varphi^+ \rangle \to 0, \quad \langle w_n, \varphi^- \rangle = \langle w_n^-, \varphi^- \rangle \to 0
$$
and 
$$
\left| \int_{\R^N} \nabla (w_n - w) \nabla \varphi + V(x) (w_n-w) \varphi \, dx \right| \to 0.
$$
Hence $\cJ'(w)(\varphi) = 0$ and $w$ is a solution. Moreover, \eqref{AR2} and Fatou's lemma show that
\begin{align*}
c_\cM &= \liminf_{n\to\infty} \cJ(w_n) = \liminf_{n\to\infty} \left( \cJ(w_n) - \frac12 \cJ'(w_n)(w_n) \right) = \liminf_{n\to\infty} \frac12 \int_{\R^N} g(x,w_n)w_n - 2 G(x, w_n) \, dx \\
&\geq \frac12 \int_{\R^N} g(x,w)w - 2 G(x, w) \, dx = \cJ(w) - \frac12 \cJ'(w)(w) = \cJ(w),
\end{align*}
i.e. 
$$
\cJ(w) \leq c_\cM.
$$
\end{altproof}

Now, recall that the group $G:=\Z^N$ acts isometrically by translations on $X=X^+\oplus X^-$ and $\cJ$ is $\Z^N$-invariant.
Let
$$\cK:=\big\{v\in X^+: (\cJ\circ m)'(u)=0\big\}$$
and suppose that $\cK$ consists of a finite number of distinct orbits. It is clear that $\Z^N$ acts discretely and hence satisfies the condition (G) in Section \ref{sec:criticaslpoitth}.
Then, in view of Lemma \ref{discrete},
$$\kappa:= \inf\big\{\|v-v'\| \ : \ \J'\bigl(m(v)\bigr) = \J'\bigl(m(v')\bigr) = 0, v\ne v'\big\}>0.$$

\begin{Lem}\label{Discreteness}
	Let $\beta\ge c_{\cN}$ and suppose that $\cK$ has a finite number of distinct orbits. If $(u_n),(v_n)\subset X^+$ are two Cerami sequences for $\tJ$ such that
	\begin{eqnarray*}
	&&0\le\liminf_{n\to\infty}\tJ(u_n)\le \limsup_{n\to\infty}\tJ(u_n)\le\beta,\\ &&0\le\liminf_{n\to\infty}\tJ(v_n)\le \limsup_{n\to\infty}\tJ(v_n)\le\beta,
	\end{eqnarray*} and $\liminf_{n\to\infty}\|u_n-v_n\|< \kappa$, then $\lim_{n\to\infty}\|u_n-v_n\|=0$.
\end{Lem}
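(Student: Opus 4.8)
The plan is to imitate the discreteness argument for the reduced functional in \cite{MederskiSchinoSzulkin}, passing everything through $\tJ=\cJ\circ m$ on $X^+$ and exploiting both the $\Z^N$-action and the separation constant $\kappa$. First I would lift the two sequences to $\cM$, writing $U_n:=m(u_n)=u_n+w_n^u$ and $V_n:=m(v_n)=v_n+w_n^v$ with $w_n^u,w_n^v\in X^-$. By property (iv) of Section \ref{sec:criticaslpoitth} one has $(1+\|U_n^+\|)\cJ'(U_n)=(1+\|u_n\|)\tJ'(u_n)\to 0$ and $\cJ'(U_n)(U_n^-)=0$, while $\cJ(U_n)=\tJ(u_n)$ lies asymptotically in $[0,\beta]$, and similarly for $V_n$. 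Hence Lemma \ref{lem:CerBounded} (equivalently Proposition \ref{Prop_bounded}) applies and $(U_n),(V_n)$, and therefore $(u_n),(v_n)$, are bounded.

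The second step is a simple algebraic reduction. Since for $\psi\in X^+$ we have $\tJ'(w)[\psi]=\cJ'(m(w))[\psi]=\langle w,\psi\rangle-\int_{\R^N}g(x,m(w))\psi\,dx$, testing with $\psi=u_n-v_n$ and subtracting yields
\begin{equation*}
\|u_n-v_n\|^2=\big(\tJ'(u_n)-\tJ'(v_n)\big)[u_n-v_n]+\int_{\R^N}\big(g(x,U_n)-g(x,V_n)\big)(u_n-v_n)\,dx .
\end{equation*}
As $(u_n-v_n)$ is bounded and $\tJ'(u_n),\tJ'(v_n)\to 0$, the first summand is $o(1)$, so the whole question reduces to controlling the integral on the right-hand side.

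The third step is a concentration--compactness dichotomy, carried out along a subsequence realizing $\ell:=\liminf_n\|u_n-v_n\|<\kappa$. If vanishing occurs, i.e. $\sup_{y}\int_{B(y,1+\sqrt N)}|u_n-v_n|^2\,dx\to 0$, then by Lions' lemma $u_n-v_n\to 0$ in $L^t$ for $2<t<2^*$; combining this with \eqref{eq:gestimates}, the $L^2\cap L^p$-boundedness of $U_n,V_n$ and the usual $\eps$-argument forces the above integral to $0$, hence $\ell=0$. If vanishing fails, there are $z_n\in\Z^N$ and $\delta>0$ with $\int_{B(z_n,1+\sqrt N)}|u_n-v_n|^2\,dx\ge\delta$; since $\Z^N$ acts by isometries and $\cJ,m,\tJ$ are equivariant or invariant, the translates $u_n(\cdot-z_n),v_n(\cdot-z_n)$ are again Cerami sequences at the same levels, and after extraction they converge weakly to some $\bar u,\bar v\in X^+$. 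Repeating the weak-limit argument from the proof of Theorem \ref{Th:Existence} (passing $\cJ'$ to the limit on test functions via local compactness and uniform integrability of $g(x,\cdot)\varphi$) shows $\bar u,\bar v\in\cK$, while $u_n(\cdot-z_n)-v_n(\cdot-z_n)\to\bar u-\bar v$ in $L^2_{\loc}$ gives $\int_{B(0,1+\sqrt N)}|\bar u-\bar v|^2\,dx\ge\delta>0$, so $\bar u\neq\bar v$.

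This last configuration is exactly where the separation constant enters and where I expect the main obstacle to be. By the definition of $\kappa$ we get $\|\bar u-\bar v\|\ge\kappa$, whereas weak lower semicontinuity of the norm and the isometry of the translations give $\|\bar u-\bar v\|\le\liminf_n\|u_n(\cdot-z_n)-v_n(\cdot-z_n)\|=\ell<\kappa$, a contradiction. Thus the non-vanishing alternative is impossible, vanishing must prevail, and $\|u_n-v_n\|\to 0$. The delicate points requiring care are the bookkeeping of the group action (ensuring the translated sequences stay Cerami sequences at unchanged levels, and that the concentration is genuinely captured rather than lost to the region where $g$ is only linear) and the identification $\bar u,\bar v\in\cK$, which must reuse precisely the passage-to-the-limit carried out in Theorem \ref{Th:Existence}; the distinctness $\bar u\neq\bar v$ then rests on the quantitative non-vanishing bound surviving in the local $L^2$-limit.
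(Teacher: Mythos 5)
Your proposal follows the paper's own proof essentially step for step: boundedness of $(m(u_n)),(m(v_n))$ via Proposition \ref{Prop_bounded}; the identity
\begin{equation*}
\|u_n-v_n\|^2=\bigl(\tJ'(u_n)-\tJ'(v_n)\bigr)[u_n-v_n]+\int_{\R^N}\bigl(g(x,m(u_n))-g(x,m(v_n))\bigr)(u_n-v_n)\,dx ,
\end{equation*}
whose first term is $o(1)$; and a Lions-type dichotomy in which the delocalized branch kills the nonlinear integral through \eqref{eq:gestimates}, while the concentration branch is excluded by translating by $z_n\in\Z^N$, passing to weak limits that are full critical points of $\cJ$, deducing their distinctness from the persisting local $L^2$ mass, and invoking $\kappa$ together with weak lower semicontinuity of the norm. (The paper phrases the dichotomy as $|u_n-v_n|_p\to 0$ versus not; for bounded sequences this is equivalent, via Lions' lemma, to your vanishing/non-vanishing split.)

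The one genuine discrepancy is in your last sentence. Because you run the entire dichotomy along a subsequence realizing $\ell=\liminf_n\|u_n-v_n\|$, the vanishing branch only yields $\ell=0$, i.e. $\liminf_n\|u_n-v_n\|=0$; nothing in your argument controls $\|u_n-v_n\|$ off that subsequence, so the concluding assertion that $\|u_n-v_n\|\to 0$ does not follow from what precedes it. The paper obtains the full limit because its dichotomy is taken on the \emph{whole} sequence: when $|u_n-v_n|_p\to 0$ along the full sequence, the displayed estimate forces $\|u_n-v_n\|\to 0$ along the full sequence, and only the complementary case is handled by subsequence extraction and the $\kappa$-argument; if you want the stated conclusion, run the vanishing estimate on the whole sequence and reserve subsequences for the concentration alternative. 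Two remarks put this in perspective. First, what you actually prove, $\liminf_n\|u_n-v_n\|=0$, is precisely what condition $(M)_0^\beta$(b) of Section \ref{sec:criticaslpoitth} demands, so your argument already suffices for the application to Theorem \ref{ThMain2}. Second, under the hypothesis as literally stated (a bound on the $\liminf$ only), the conclusion $\lim_n\|u_n-v_n\|=0$ is in fact out of reach: if $u_0\neq 0$ is a critical point of $\tJ$ with $\tJ(u_0)\le\beta$, then $u_n\equiv u_0$ and $v_n$ alternating between $u_0$ and a translate $u_0(\cdot-z)$, $z\in\Z^N\setminus\{0\}$, are Cerami sequences at the same level satisfying all hypotheses, yet $\|u_n-v_n\|$ does not tend to $0$. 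So the subsequence issue you ran into reflects a defect of the lemma's formulation (one should either assume $\|u_n-v_n\|$ small for all large $n$, as in $(M)_0^\beta$(b), or conclude only that the $\liminf$ vanishes); indeed, fixing the subsequence first, as you do, makes the $\kappa$-contradiction tighter than in the paper, where a $\liminf$ along a sub-subsequence is compared against a hypothesis on the full-sequence $\liminf$.
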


\begin{proof}
	Let $m(u_n)=u_n+w^1_n$, $m(v_n)=v_n+w^2_n$. Note that $(\tJ(u_n))$ and $(\tJ(v_n))$ are bounded, hence by Proposition \ref{Prop_bounded},
	 $(m(u_n))$, $(m(v_n))$ are bounded.
	We first consider the following case 
	\begin{equation}
	\label{pqconvergence}
	\lim_{n\to\infty}|u_n-v_n|_{p}=0
	\end{equation}
	and  we prove that
	\begin{equation}
	\label{convergence}
	\lim_{n\to\infty}\|u_n-v_n\|=0.
	\end{equation}
Taking into account \eqref{eq:gestimates} we obtain
	\[
	\begin{split}
	\|u_n-v_n\|^2 = \, & \J'(m(u_n))(u_n-v_n)-\J'(m(v_n))(u_n-v_n)\\
	& +\int_{\R} \left( g(x,m(u_n))-g(x,m(v_n)) \right) (u_n-v_n) \,dx\\
	\le \, & o(1)+\int_{\R}\bigl(|g(x,m(u_n))|+|g(x,m(v_n))|\bigr)|u_n-v_n|\,dx\\
	\le \, & o(1)+\varepsilon \int_{\R^N} |m(u_n)| |u_n - v_n| \, dx + \varepsilon \int_{\R^N} |m(v_n)| |u_n-v_n| \, dx \\ &+ c_\varepsilon \int_{\R^N} |m(u_n)|^{p-1} |u_n - v_n| \, dx + c_\varepsilon \int_{\R^N} |m(v_n)|^{p-1} |u_n - v_n| \, dx \\
	\le \, & o(1) + \varepsilon ( |m(u_n)|_2^2 + |m(v_n)|_2^2 ) |u_n-v_n|_2^2 \\
	&+ c_\varepsilon (|m(v_n)|_p^{p-1} + |m(u_n)|_p^{p-1} ) |u_n - v_n|_p \to 0,
	\end{split}
	\]
	which gives \eqref{convergence}.\\
	\indent 
	Suppose now that \eqref{pqconvergence} does not hold. From Lions' lemma, there are $\eps>0$ and a sequence $(y_n)\subset\Z^N$ such that, passing to a subsequence,
	\begin{equation}
	\label{boundedaway}
	\int_{B(y_n,1+\sqrt{N})}|u_n-v_n|^2\,dx\ge\eps.
	\end{equation}
	Since $\J$ is $\Z^N$-invariant, we may assume $y_n=0$. As $(m(u_n)), (m(v_n))$ are bounded, up to a subsequence,
	\begin{equation}
	\label{weakconvergence}
	m(u_n) \rightharpoonup u+w^1\hbox{ and }m(v_n)\rightharpoonup v+w^2\quad\hbox{in }X^+ \oplus X^- 
	\end{equation}
	for some $u,v\in X^+$ and $w^1,w^2\in X^-$. Passing to a subsequence we may assume that $u_n\to u$ and $v_n\to v$ in $L^2_{\loc}(\R^N)$, hence $u\ne v$ according to \eqref{boundedaway}.
	We can easily compute that for any $\varphi \in \cC_0^\infty (\R^N)$
	$$
	\J'(m(u_n))(\varphi) \to \cJ'(u+w^1)(\varphi)	
	$$
	Since $(m(u_n))$ and $(m(v_n))$ are Palais-Smale sequences, one can easily compute that
	\[
	\J'(u+w^1)=\J'(v+w^2)=0.
	\]
	Thus
	\[
	\liminf_{n\to\infty}\|u_n - v_n\| \ge \|u - v\| \ge \kappa,
	\]
	which is a contradiction.
\end{proof}

\begin{altproof}{Theorem \ref{ThMain2}}
Since (I1)--(I8) are satisfied, $\cJ$ is even and $(M)_0^\beta$ holds by Proposition \ref{Prop_bounded} and Lemma \ref{Discreteness} the statement follows directly by Theorem \ref{Th:CrticMulti}.
\end{altproof}

\appendix

\section{Linking approach}\label{sect:appA}

The existence of a nontrivial solution can be also shown by applying a linking-type argument, cf. \cite{BenciRabinowitz,KryszSzulkin,Mederski2016,LiSzulkin}. Define the set
$$
\mathcal{P} := \{ u \in X \setminus X^- \ : \ u^+ \in \cQ \},
$$
where $\cQ$ is the vector space defined in Section \ref{sect:Q}. We shall see that $\mathcal{P}$ joins the linking geometry with the set
$$
\cN_\cQ := \{ u \in \mathcal{P} \ : \ \cJ'(u) |_{\R u \oplus X^-} = 0 \}.
$$
Note that for $K = \R^N$ we take $\cQ = X^+$ and then we have $\cP = X \setminus X^-$, so that $\cP$ joins the linking geometry with the so-called Nehari-Pankov manifold
$$
\cN = \{ u \in X \setminus X^- \ : \ \cJ'(u) |_{\R u \oplus X^-} = 0 \}.
$$
Otherwise $\cN_\cQ$ may be a proper subset of $\cN$.

\begin{Lem}\label{linking}
The functional $\cJ$ has the linking geometry, i.e. there exists $r > 0$ such that
$$
\inf_{u \in X^+, \ \|u\|=r} \cJ(u) > 0,
$$
and for every $u \in \cP$ there is $R(u) > r$ such that
$$
\sup_{\partial M(u)} \cJ \leq \cJ(0) = 0,
$$
where
$$
M(u) = \{ tu+v \ : \ t \geq 0, \ v \in X^-, \ \|tu+v\| \leq R(u) \}.
$$
\end{Lem}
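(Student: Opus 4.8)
The plan is to treat the two assertions separately. The first inequality, $\inf_{u\in X^+,\,\|u\|=r}\cJ(u)>0$ for a suitable $r>0$, is exactly condition (I6), already verified from the growth estimate \eqref{eq:gestimates}, so nothing new is needed there. For the geometry at infinity, the first step is to rewrite $M(u)$ conveniently: since $tu+v=tu^++(tu^-+v)$ and $tu^-+v$ ranges over all of $X^-$ as $v$ does, one has $M(u)=\{tu^++w:\ t\ge 0,\ w\in X^-,\ \|tu^++w\|\le R(u)\}$, with $u^+\in\cQ\setminus\{0\}$ because $u\in\cP\setminus X^-$. Its relative boundary splits into the flat face $\{w\in X^-:\|w\|\le R(u)\}$ and the spherical cap $\{tu^++w:\ t\ge 0,\ \|tu^++w\|=R(u)\}$. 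On the flat face the estimate is immediate: (G1) and (G4) give $g(x,u)u\ge 0$, hence $G\ge 0$, so $\cJ(w)=-\tfrac12\|w\|^2-\int_{\R^N}G(x,w)\,dx\le 0=\cJ(0)$ for every choice of $R(u)$.

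The heart of the proof is the spherical cap, which I would handle by contradiction. If $\sup\{\cJ(z):z\in E_u,\ \|z\|=R\}$, with $E_u:=\{tu^++w:t\ge0,\ w\in X^-\}$, did not tend to $-\infty$ as $R\to\infty$, there would be a sequence $z_n=t_nu^++w_n$ with $s_n:=\|z_n\|\to\infty$ and $\cJ(z_n)$ bounded below. Normalizing $\ov z_n:=z_n/s_n=\ov t_n u^++\ov w_n$, so that $\ov t_n^2\|u^+\|^2+\|\ov w_n\|^2=1$, I pass to a subsequence with $\ov t_n\to\ov t\ge 0$, $\ov w_n\weakto\ov w$ and $\ov z_n\to\ov z:=\ov t u^++\ov w$ a.e. on $\R^N$. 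Since $\tfrac12\ov t_n^2\|u^+\|^2-\tfrac12\|\ov w_n\|^2=\tfrac12-\|\ov w_n\|^2$ and $G\ge 0$,
$$
\frac{\cJ(z_n)}{s_n^2}=\frac12-\|\ov w_n\|^2-\int_{\R^N}\frac{G(x,z_n)}{s_n^2}\,dx\le \frac12-\|\ov w_n\|^2-\int_K\frac{G(x,z_n)}{s_n^2}\,dx.
$$
In the case $\ov t=0$ one has $\|\ov w_n\|^2\to 1$, so already $\cJ(z_n)/s_n^2\le\tfrac12-\|\ov w_n\|^2\to-\tfrac12$ and $\cJ(z_n)\to-\infty$, contradicting boundedness from below.

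The case $\ov t>0$ is where I expect the only genuine difficulty, and it is exactly the place where the construction of $\cQ$ and the partial super-quadraticity (G3) are used. Now $(\ov z)^+=\ov t u^+\in\cQ\setminus\{0\}$, so $\ov z\in X\setminus X^-$ and Lemma \ref{lem:Q} gives $|S|>0$ for $S:=\supp(\ov z)\cap K$. For a.e. $x\in S$ we have $\ov z(x)\ne 0$, hence $|z_n(x)|=s_n|\ov z_n(x)|\to\infty$ with $x\in K$, so (G3) yields $\frac{G(x,z_n(x))}{s_n^2}=\frac{G(x,z_n(x))}{z_n(x)^2}\,\ov z_n(x)^2\to\infty$. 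As the integrand is nonnegative, Fatou's lemma forces $\int_K G(x,z_n)/s_n^2\,dx\to\infty$, and the bound above again gives $\cJ(z_n)\to-\infty$, a contradiction. This establishes that $\sup\{\cJ(z):z\in E_u,\ \|z\|=R\}\to-\infty$, so some $R(u)>r$ makes it nonpositive; together with the flat face this yields $\sup_{\partial M(u)}\cJ\le 0=\cJ(0)$. The delicate point throughout is that $G$ is super-quadratic only on $K$: the whole estimate must be localized to $K$, and it is precisely Lemma \ref{lem:Q}\,---\,hence the careful choice of $\cQ$ as functions supported in $K$\,---\,that guarantees the weak limit $\ov z$ does not vanish a.e. on $K$, which is what makes (G3) applicable.
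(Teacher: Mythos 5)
Your proof is correct and follows essentially the same route as the paper's own appendix proof: split $\partial M(u)$ into the flat face in $X^-$ (where $G\ge 0$ gives $\cJ\le 0$) and the spherical part, then run a contradiction argument on a normalized sequence, distinguishing the cases $\ov t=0$ and $\ov t>0$ and using Lemma \ref{lem:Q}, (G3) and Fatou's lemma in the latter, with your direct parametrization by $u^+$ being only a cosmetic simplification of the paper's bookkeeping via $s_n=t_n/\|t_nu+v_n\|$. The single (harmless, and shared with the paper) imprecision is that you should take $S=\{x\in K:\ov z(x)\ne 0\}$, which has positive measure by the proof of Lemma \ref{lem:Q}, rather than $\supp \ov z\cap K$, since a support may contain a positive-measure set on which the function vanishes.
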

 
\begin{proof}
The first part follows directly from \eqref{eq:gestimates}. Take $u \in \cP$. Observe that
$$
\partial M(u) = \{ tu+v \ : \ v \in X^-, \|tu+v\| = R(u), \ t > 0 \} \cup \{ v \in X^- \ : \ \|v\| \leq R(u) \} =: M_1 \cup M_2.
$$
Obviously, if $v \in X^-$, we have $\cJ(v) \leq 0$. Hence
$$
\sup_{M_2} \cJ \leq 0.
$$
Suppose by contradiction that $\sup_{M_1} \cJ > 0$, i.e. there are $v_n \in X^-$ and $t_n > 0$ such that $\cJ(t_n u + v_n) > 0$ and $\|t_n u + v_n \|\to\infty$. Define
$$
w_n := \frac{t_n u + v_n}{\|t_n u + v_n\|}.
$$
Note that
$$
w_n^+ = \frac{t_n}{\|t_n u + v_n\|} u^+, \quad w_n^- = \frac{t_n u^- + v_n}{\|t_n u + v_n\|} .
$$
Let $s_n := \frac{t_n}{\|t_n u + v_n\|} > 0$. Then
$$
w_n = s_n u^+ + w_n^-.
$$
Obviously $\|w_n\| = 1$ and therefore passing to a subsequence $w_n \weakto w$ and
\begin{align*}
s_n &\to s, \\
w_n^- &\weakto w^-, \\
w_n^- &\to w^- \quad \mbox{in} \ L^2_{\loc} (\R^N)\hbox{ and in }\ L^p_{\loc} (\R^N).
\end{align*}
Moreover
\begin{align*}
0 < \frac{\cJ(t_n u + v_n)}{\|t_n u + v_n\|^2} &= \frac12 \frac{t_n^2 \|u^+\|^2}{\|t_n u + v_n\|^2} - \frac12 \frac{\|t_nu^- + v_n\|^2}{\|t_n u + v_n\|^2} - \int_{\R^N} \frac{G(x,t_n u + v_n)}{\|t_n u + v_n\|^2} \, dx \\
&= \frac12 s_n^2 \|u^+\|^2 - \frac12 \| w_n^-\|^2 - \int_{\R^N} \frac{G(x,t_n u + v_n)}{\|t_n u + v_n\|^2} \, dx.
\end{align*}
If $s = 0$ we have
$$
0 \leq \frac12 \| w_n^-\|^2 + \int_{\R^N} \frac{G(x,t_n u + v_n)}{\|t_n u + v_n\|^2} \, dx < \frac12 s_n^2 \|u^+\|^2 \to 0.
$$
In particular $w_n^- \to 0$ and therefore $\|w_n\| \to 0$ -- a contradiction. Hence $s > 0$ and $su^+ + w^- \in X \setminus X^-$. Since $su^+ \in \cQ$, by Lemma \ref{lem:Q} we get
$$
|\supp (   s u^+ + w^- ) \cap K| > 0.
$$
Hence, in view of the Fatou's lemma and (G3)
\begin{align*}
0 &\leq \frac12 s_n^2 \|u^+\|^2 - \frac12 \| w_n^-\|^2 - \int_{\R^N} \frac{G(x,t_n u + v_n)}{\|t_n u + v_n\|^2} \, dx \\
&= \frac12 s_n^2 \|u^+\|^2 - \frac12 \| w_n^-\|^2 - \int_{\R^N} \frac{G(x,t_n u + v_n)}{|t_n u + v_n|^2} |s_n u^+ + w_n^-|^2 \, dx \\
&\leq \frac12 s_n^2 \|u^+\|^2 - \frac12 \| w_n^-\|^2 - \int_{K} \frac{G(x,t_n u + v_n)}{|t_n u + v_n|^2} |s_n u^+ + w_n^-|^2 \, dx \\
&\leq \frac12 s_n^2 \|u^+\|^2 - \int_{K} \frac{G(x,t_n u + v_n)}{|t_n u + v_n|^2} |s_n u^+ + w_n^-|^2 \, dx \to -\infty
\end{align*}
-- a contradiction, since $|K| > 0$. Hence $\sup_{M_1} \cJ \leq 0$ and the proof is completed.
\end{proof} 
  
For any set $A \subset H^1 (\R^N)$, $I \subset [0,\infty)$ such that $0 \in I$ and a function $h : A \times I \rightarrow H^1 (\R^N)$ we collect the following assumptions inspired by \cite{LiSzulkin}:
\begin{enumerate}
\item[(h1)] $h$ is continuous;
\item[(h2)] $h(u,0) = u$ for all $u \in A$;
\item[(h3)] $\cJ(h(u,t)) \leq \max\{ \cJ(u), -1 \}$ for all $(u,t) \in A \times I$;
\item[(h4)] for every $(u,t) \in A \times I$ there is an open neighbourhood $W$ in $H^1 (\R^N) \times I$ such that the set $\{v-h(v,s) \ : \ (u,t) \in W \cap (A \times I) \}$ is contained in a finite-dimensional subspace of $H^1 (\R^N)$.
\end{enumerate}  
In view of \cite[Theorem 2.1]{Mederski2016}, there exists a Cerami sequence at level $c_{\cP}$, i.e. a sequence $(u_n) \subset H^1 (\R^N)$ such that
$$
\cJ(u_n) \to c_\cP, \quad (1+\|u_n\|) \cJ'(u_n) \to 0,
$$
where
\begin{align}\label{c}
c_{\cP} &:= \inf_{u \in \cP} \inf_{h \in \Gamma(u)} \sup_{u' \in M(u)} \cJ(h(u',1)) > 0,\\ \nonumber
\Gamma(u) &:= \{ h \in \cC (M(u) \times [0,1]): \ h \mbox{ satisfies } (h1)-(h4) \}.
\end{align}
From Lemma \ref{lem:CerBounded} we know that $(u_n)$ is bounded and we may follow the proof of Theorem \ref{Th:Existence} as in Section \ref{sect:existence} and show that there is a critical point $u \neq 0$ of $\cJ$ such that $\cJ(u) \leq c_{\cP}$.

\section*{Acknowledgements}
The authors would like to thank referees for several valuable comments and especially for indicating a gap in the proof of Lemma 5.1 in the original version of the manuscript.
They are also grateful to Prof. Jari Taskinen for pointing out us references \cite{KuchmentBAMS,KuchmentTaskinen}. 
Bartosz Bieganowski was partially supported by the National Science Centre, Poland (Grant No. 2017/25/N/ST1/00531). Jaros\l aw Mederski was partially supported by the National Science Centre, Poland (Grant No. 2017/26/E/ST1/00817).

\end{document}